\numberwithin{equation}{section}
\newtheorem{thm}{Theorem}[section]
\newtheorem{lem}[thm]{Lemma}
\newtheorem{que}[thm]{Question}
\newtheorem{prop}[thm]{Proposition}
\newtheorem{cor}[thm]{Corollary}
\theoremstyle{remark}
\newtheorem{rmk}[thm]{Remark}
\newtheorem{lista}[thm]{List}
\newcommand{\cA}{{\mathcal A}}
\newcommand{\cR}{{\mathcal R}}
\newcommand{\cQ}{{\mathcal Q}}
\newcommand{\cS}{{\mathcal S}}
\newcommand{\CC}{{\mathbb C}}
\newcommand{\PP}{{\mathbb P}}
\newcommand{\ZZ}{{\mathbb Z}}
\newcommand{\Ox}{{\mathscr{O}_{X}}}
\newcommand{\Dx}{{\mathscr{D}_{X}}}
\newcommand{\Dm}{{\mathscr{D}_{X}^{(e)}}}
\newcommand{\Dxm}{{\mathscr{D}_{X}^{\leq m}}}
\newcommand{\eE}{{\mathscr{E}}}
\newcommand{\Oq}{{\mathscr{O}_{Q_{2m}}}}
\newcommand{\Sp}{{\Sigma_{+}}}
\newcommand{\Sm}{{\Sigma_{-}}}
\title{$D$-affinity of quadrics revisited} 
\author{Feliks Rączka}
\address{Institute for Advanced Study, 1 Einstein Drive, 08540, Princeton, NJ
  }
\email{fraczka@impan.pl}
\date{\today}
\begin{document}

\maketitle

\begin{abstract}
Let $K$ be aa algebraically closed field of characteristic $p\geq3$ and let $Q_{n}\subset\PP^{n+1}_{K}$ be a smooth quadric hypersurface. We show that if $n=2m\geq4$ then $Q_{n}$ is not $D$-affine. In particular, we show the grassmannian $\textnormal{Gr}(2,4)$ is not $D$-affine, which gives an example of a non $D$-affine flag variety of minimal possible dimension in characteristic $p\geq3$. Our result complements previous work of A.\ Langer, who showed that if $p\geq n=2m+1$ then $Q_{n}$ is $D$-affine.
\end{abstract}
\section{Introduction}
\subsection{The main result}
Let $X$ be a smooth projective variety over an algebraically closed field $K$ and let $\Dx$ be the sheaf over $X$ of $K$-linear differential operators (in the sense of Grothendieck). Recall that a $D$-\textit{module} is a left $\Dx$-module that is quasi-coherent as a left $\Ox$-module. Further, recall that $X$ is called $D$-\textit{quasi-affine} if every $D$-module is generated (as a $\Dx$-module) by its global sections, and that $X$ is called $D$-\textit{affine} if it is $D$-quasi-affine and the vanishing $H^{i}(X,M)=0$ holds for all $i>0$ and all $D$-modules $M$. We denote by $Q_{n}$ the (unique up to an isomorphism) smooth quadric hypersurface in $\PP^{n+1}$. The objective of this note is to show the following. 
\begin{thm}\label{Thm1}
Assume that $\textnormal{char }K=p\geq 3$ and $m\geq 2$. Then the even-dimensional smooth quadric hypersurface $Q_{2m}$ is not $D$-affine.
\end{thm}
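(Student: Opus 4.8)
The plan is to refute $D$-affinity by producing nonvanishing higher cohomology on the most canonical $D$-module available, namely $\Dx$ itself. Since $\Dx$ is quasi-coherent as an $\Ox$-module it is a $D$-module, so if $Q_{2m}$ were $D$-affine we would have $H^{i}(Q_{2m},\Dx)=0$ for all $i>0$; the whole argument is organized around contradicting this. First I would write $\Dx=\bigcup_{r}\Dr$ as the filtered union of its level-$r$ pieces, with $\Dr\cong\mathcal{E}nd_{\Oxr}(\Ox)$. Pushing forward along the $r$-fold Frobenius $F^{r}$, which is finite, identifies $H^{i}(Q_{2m},\Dr)$ with $H^{i}(Q_{2m},\mathcal{E}nd_{\Ox}(F^{r}_{*}\Ox))$. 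Because cohomology commutes with filtered colimits on a Noetherian scheme, the problem reduces to showing
\begin{equation*}
\varinjlim_{r}H^{i}\bigl(Q_{2m},\,\mathcal{E}nd_{\Ox}(F^{r}_{*}\Ox)\bigr)\neq 0\qquad\text{for some }i>0.
\end{equation*}

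The second step is to make the right-hand side computable through the decomposition of Frobenius pushforwards on quadrics. I would invoke the known splitting of $F^{r}_{*}\Ox$ into a direct sum of line bundles $\Ox(j)$ together with twisted spinor bundles $\Sp(j)$ and $\Sm(j)$; correspondingly $\mathcal{E}nd_{\Ox}(F^{r}_{*}\Ox)$ breaks up into $\mathcal{H}om$'s between these summands. The line-bundle contributions $\Ox(a)$ are controlled by the classical Bott-type vanishing on quadrics, and the mixed spinor/line terms $\Sp(a),\Sm(a)$ by the standard cohomology tables for twisted spinor bundles. The essential new contributions are the spinor/spinor cross terms, above all $\mathcal{H}om(\Sp(a),\Sm(b))$ between the two \emph{distinct} spinor bundles.

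The heart of the argument, and exactly where even quadrics must diverge from Langer's positive result for the odd ones, is to isolate a nonzero class in an intermediate degree $0<i<2m$ coming precisely from these cross terms $\mathcal{H}om(\Sp,\Sm)$. On $Q_{2m+1}$ there is a single spinor bundle and no such term, which is consistent with $D$-affinity there; on $Q_{2m}$ the pair $\Sp,\Sm$ is genuinely present in $F^{r}_{*}\Ox$ once $r$ is large. Concretely I would (a) carry out the cohomology computation exhibiting $H^{i}(Q_{2m},\mathcal{H}om(\Sp(a),\Sm(b)))\neq 0$ for a suitable degree $i$ and suitable twists $a,b$, and (b) verify that summands $\Sp(a)$ and $\Sm(b)$ with these twists actually occur in $F^{r}_{*}\Ox$.

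I expect the main obstacle to lie in the passage to the colimit: one must ensure the distinguished class is not eventually annihilated by the transition maps $H^{i}(Q_{2m},\mathscr{D}_{r})\to H^{i}(Q_{2m},\mathscr{D}_{r+1})$ induced by $\mathscr{D}_{r}\hookrightarrow\mathscr{D}_{r+1}$. I would handle this either by checking directly that the chosen classes are compatible under these maps, or by a dimension count forcing $\dim_{K}H^{i}(Q_{2m},\mathscr{D}_{r})$ to grow without bound so that the colimit cannot vanish. Granting a nonzero colimit we obtain $H^{i}(Q_{2m},\Dx)\neq 0$, contradicting $D$-affinity; since $\textnormal{Gr}(2,4)\cong Q_{4}$, the statement about flag varieties follows at once.
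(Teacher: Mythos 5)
Your setup is the same as the paper's: reduce to $H^{1}(Q_{2m},\mathscr{D}_{Q_{2m}})\neq 0$ via the $p$-filtration, identify $H^{1}(Q_{2m},\Dm)$ with $\textnormal{Ext}^{1}_{\Oq}(\mathsf{F}^{e}_{*}\Oq,\mathsf{F}^{e}_{*}\Oq)$, decompose $\mathsf{F}^{e}_{*}\Oq$ into line bundles and twisted spinor bundles, and locate the nonvanishing in the cross terms between the two spinor bundles (in the paper this is $\textnormal{Ext}^{1}(\Sp,\Sm(-1))=\textnormal{Ext}^{1}(\Sm,\Sp(-1))=K$, realized because $\cS(-m)$ and $\cS(-m+1)$ both occur in $\mathsf{F}^{e}_{*}\Oq$ for $e\gg0$). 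Up to that point you are on track, and you correctly identify the real obstacle: survival of the class in the colimit. But that is where the proposal stops being a proof. Your fallback option --- a dimension count showing $\dim_{K}H^{1}(Q_{2m},\mathscr{D}^{(e)}_{X})$ grows without bound --- cannot work: a filtered colimit of vector spaces of unbounded dimension is zero whenever every element is eventually killed by the transition maps, and the transition maps here are exactly ``regard the same $\mathscr{O}_{X}^{p^{e}}$-linear extension as an $\mathscr{O}_{X}^{p^{e+1}}$-linear one,'' which a priori could split every class. Indeed, on odd quadrics the analogous $\textnormal{Ext}^{1}$ at finite level is also nonzero, yet Langer proves $D$-affinity for $p\geq 2m+1$; so nonvanishing at each finite stage, with whatever growth, decides nothing.

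Your first option (``check directly that the classes are compatible'') is the entire content of the paper's argument, and none of it is supplied. Concretely, the paper reformulates survival in the colimit as: a non-split extension $0\to\cS(-m)^{\oplus\alpha}\to\eE\to\cS(-m+1)^{\oplus\beta}\to 0$ stays non-split after applying $\mathsf{F}_{*}$ (then one inducts on $e$). Proving this requires (i) classifying the possible middle terms $\eE$ via the ACM-bundle theorem and showing the extension splits if and only if the multiplicity $\rho$ of the trivial summand $\Oq(-m)$ in $\eE$ is zero; (ii) Langer's fact that $\mathsf{F}_{*}\Oq(-m)$ is a sum of line bundles while $\mathsf{F}_{*}\cS(-m)$ and $\mathsf{F}_{*}\cS(-m+1)$ each contain spinor summands; and (iii) a multiplicity count using the symmetric, non-zero matrices of spinor multiplicities to force $\rho=0$ when the pushed-forward sequence splits. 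Without an argument of this kind (or some other mechanism preventing the transition maps from killing your class), the proposal does not establish $H^{1}(Q_{2m},\mathscr{D}_{Q_{2m}})\neq0$, and the theorem does not follow.
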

\begin{rmk}
It is follows from the work of Beilinson--Bernstein \cite{Beilinson-Bernstein} and Holland-Polo \cite{Holland-Polo} that over a field of of characteristic zero $Q_{n}$ is $D$-affine for all $n\geq 1$. In the case of positive characteristic, A.\ Langer \cite{Langer_Quadrics} showed that odd-dimensional quadric $Q_{2m+1}$ is $D$-affine provided $p\geq 2m+1$.
\end{rmk}
\noindent
\noindent
The above result is best appreciated in the broader context of the problem of classification of $D$-affine flag varieties. Below, we briefly survey on this problem to motivate our work. Then, we state some consequences of Theorem \ref{Thm1}. To end this section, we explain the key ideas behind the proof of Theorem \ref{Thm1}.

\subsection{$D$-affinity of flag varieties}

Let $G$ be a semi-simple, simply connected algebraic group, and let $P\subset G$ be a parabolic subgroup. In what follows, we refer to the homogeneous variety $G/P$ as a \textit{flag variety}. When $P$ is a Borel subgroup, we say that the corresponding flag variety is \textit{full}. It is well-known that over $K=\CC$ all flag varieties are $D$-affine. For full flag varieties, this has been established in the celebrated paper of Beilinson--Bernstein \cite{Beilinson-Bernstein}, and generalized to arbitrary flag varieties by Holland--Polo in \cite{Holland-Polo}. When $\textnormal{char }K=p>0$, classification of $D$-affine flag varieties remains an open problem. 
\medskip

From now on, we assume that $\textnormal{char }K=p>0$.

\begin{lista}\label{List1}
The following is known about $D$-affinity of flag varieties over fields of positive characteristic $p$.
\begin{enumerate}
    \item The projective space $\PP^{n}$ is $D$-affine (B.\ Haastert \cite[3.1. Satz]{Haastert}),
    \item The smooth quadric hypersurface of odd dimension $Q_{2m-1}\subset \PP^{2m}$ is $D$-affine,
    provided $p\geq 2n-1$ (A.\ Langer \cite[Corollary 0.3]{Langer_Quadrics}),
    \item The full flag variety $\textnormal{SL}_{3}/B$ is $D$-affine (B.\ Haastert \cite[4.5.4. Satz]{Haastert}),
    \item The full flag variety $\textnormal{Sp}_{4}/B$ is $D$-affine (A.\ Samokhin \cite{Samokhin_D-affine}),
    \item The grassmannian $\textnormal{Gr}(2,5)$ is \textit{not} $D$-affine for any $p>0$ (Kashiwara--Lauritzen \cite{Kashiwara-Lauritzen}),
    \item If $P_{1}\subset P_{2}$ are two parabolic subgroups and $G/P_{1}$ is $D$-affine then $G/P_{2}$ is $D$-affine (this follows from Langer's \cite[Theorem 0.2(4)]{Langer_D-affine}),
    \item $X\times Y$ is $D$-affine if and only if both $X$ and $Y$ are $D$-affine.
\end{enumerate}
\end{lista}
\noindent
To the best of our knowledge, prior to this article there was no examples of flag varieties that are known (not) to be $D$-affine and are not constructed by applying operations (6)-(7) to the flag varieties (1)-(5) above. In particular, we believe that Theorem \ref{Thm1} is the first example of a flag variety that is not $D$-affine and does not admit a fibration to $\textnormal{Gr}(2,5)$.

\subsection{Consequences of Theorem \ref{Thm1}}

Below, we list some consequences of Theorem \ref{Thm1} for the problem of classification of $D$-affine flag varieties.
\medskip

First, using Langer's result on fibrations of $D$-affine varieties \cite[Theorem 0.2(4)]{Langer_D-affine} we construct further examples of non-$D$-affine flag varieties.
\begin{cor}\label{Cor1}
Assume that $\textnormal{char }K=p\geq3$ and $m\geq2$. Let $Q_{2m}\subset \PP^{2m+1}$ be a smooth quadric hypersurface, let $Y$ be a smooth projective variety, and let $f:Y\to Q_{2m}$ be a surjective morphism such that $f_{*}\mathscr{O}_{Y}=\mathscr{O}_{Q_{2m}}$. Then $Y$ is not $D$-affine.
\end{cor}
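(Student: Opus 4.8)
The plan is to deduce Corollary \ref{Cor1} from Theorem \ref{Thm1} by contraposition, using the functoriality of $D$-affinity under fibrations as recorded in Langer's \cite[Theorem 0.2(4)]{Langer_D-affine} (item (6) of List \ref{List1} is the flag-variety shadow of this general statement). Suppose for contradiction that $Y$ is $D$-affine; I would then show that the base $Q_{2m}$ must be $D$-affine as well, contradicting Theorem \ref{Thm1} since we are in characteristic $p\geq 3$ with $m\geq 2$. The hypotheses $f$ surjective and $f_{*}\mathscr{O}_{Y}=\mathscr{O}_{Q_{2m}}$ are exactly the conditions under which $D$-affinity descends along $f$, so the corollary is essentially an application of that descent principle.

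Concretely, I would argue as follows. Let $M$ be a $D$-module on $Q_{2m}$; the goal is to verify the two defining conditions of $D$-affinity ($D$-quasi-affinity and higher cohomology vanishing) for $M$. The key point is that, because $f_{*}\mathscr{O}_{Y}=\mathscr{O}_{Q_{2m}}$, the pullback functor $f^{*}$ behaves well with respect to differential operators and global sections: one pulls $M$ back to a $D$-module $f^{*}M$ (or the appropriate $\mathscr{D}$-theoretic pullback) on $Y$, applies $D$-affinity of $Y$ to conclude generation by global sections and vanishing of $H^{i}(Y, f^{*}M)$ for $i>0$, and then pushes forward. The condition $f_{*}\mathscr{O}_{Y}=\mathscr{O}_{Q_{2m}}$ together with the projection formula identifies $f_{*}f^{*}M$ with $M$ (up to the relevant torsion/coherence bookkeeping), and surjectivity of $f$ ensures no information is lost on the base. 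For cohomology, one uses the Leray spectral sequence $H^{p}(Q_{2m}, R^{q}f_{*} f^{*}M)\Rightarrow H^{p+q}(Y, f^{*}M)$ together with $R^{0}f_{*}f^{*}M = M$ to transport the vanishing on $Y$ back to $Q_{2m}$. Rather than reprove these compatibilities, I would simply invoke Langer's \cite[Theorem 0.2(4)]{Langer_D-affine}, which packages precisely this descent: if $f:Y\to Z$ is surjective with $f_{*}\mathscr{O}_{Y}=\mathscr{O}_{Z}$ and $Y$ is $D$-affine, then $Z$ is $D$-affine.

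The main obstacle, insofar as there is one, is purely one of bookkeeping and citation rather than genuine mathematical difficulty: one must confirm that the hypotheses of Corollary \ref{Cor1} match verbatim the hypotheses of the cited fibration theorem, in particular that no additional smoothness, flatness, or properness assumption on $f$ beyond what is stated is secretly required, and that the characteristic-$p$ subtleties (the non-exactness of various $\mathscr{D}$-module functors, the role of Frobenius) are already absorbed into Langer's formulation. Since both $Y$ and $Q_{2m}$ are smooth and projective and $f$ is a surjective morphism of such, the geometric setup is exactly the one covered. Thus the proof reduces to a single clean step:

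\begin{proof}
Suppose, for contradiction, that $Y$ is $D$-affine. By assumption $f:Y\to Q_{2m}$ is a surjective morphism of smooth projective varieties satisfying $f_{*}\mathscr{O}_{Y}=\mathscr{O}_{Q_{2m}}$. Under these hypotheses, Langer's \cite[Theorem 0.2(4)]{Langer_D-affine} implies that $Q_{2m}$ is $D$-affine. This contradicts Theorem \ref{Thm1}, since $\textnormal{char }K=p\geq 3$ and $m\geq 2$. Therefore $Y$ is not $D$-affine.
\end{proof}
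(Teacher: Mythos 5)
Your proposal is correct and matches the paper exactly: the paper derives Corollary \ref{Cor1} in Section \ref{Section_More_Counterexamples} as an immediate consequence of Langer's fibration theorem (\cite[Theorem 0.2(4)]{Langer_D-affine}, restated there as Theorem \ref{LangerFibrationTheorem}) combined with Theorem \ref{Thm1}, which is precisely your contrapositive argument. The surrounding discussion of Leray spectral sequences and projection formulas is unnecessary since, as you note, Langer's theorem packages the descent, but it does not detract from the proof.
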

\noindent
In the context of flag varieties, the above corollary is (6) from List \ref{List1}. For example, it shows that if $G$ is a semi-simple, simply connected group of type $D_{m+1}$, $m\geq3$ and $B\subset G$ is a Borel subgroup then the full flag variety $G/B$ is not $D$-affine. We leave a more detailed discussion for Section \ref{Section_More_Counterexamples}. 
\medskip

\noindent
Second, recall that the Pl{\"u}cker embedding $\textnormal{Gr}(2,4)\hookrightarrow\PP^{5}$ realizes $\textnormal{Gr}(2,4)$ as a quadric hypersurface in $\PP^{5}$. In particular, as a special case of Theorem \ref{Thm1}, we obtain the following.
\begin{cor}\label{Cor2}
Assume that $\textnormal{char K}=p\geq 3$. Then the grassmannian $\textnormal{Gr}(2,4)$ is not $D$-affine.
\end{cor}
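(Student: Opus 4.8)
The plan is to deduce this directly from Theorem \ref{Thm1} once I identify $\textnormal{Gr}(2,4)$ with the four-dimensional smooth quadric $Q_{4}$. The mechanism is the Plücker embedding: sending a two-dimensional subspace $W\subset K^{4}$ to the line $\bigwedge^{2}W\subset\bigwedge^{2}K^{4}\cong K^{6}$ realizes $\textnormal{Gr}(2,4)$ as a closed subvariety of $\PP(\bigwedge^{2}K^{4})=\PP^{5}$.

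First I would recall that the image of this embedding is cut out by a single quadratic equation, the Plücker relation. Choosing a basis $e_{1},\dots,e_{4}$ of $K^{4}$ and writing the Plücker coordinates $p_{ij}$ ($1\leq i<j\leq 4$) on $\bigwedge^{2}K^{4}$, a nonzero bivector $\omega=\sum p_{ij}\,e_{i}\wedge e_{j}$ is decomposable precisely when $\omega\wedge\omega=0$, which unwinds to the single equation
\[
p_{12}p_{34}-p_{13}p_{24}+p_{14}p_{23}=0.
\]
Thus $\textnormal{Gr}(2,4)$ is a quadric hypersurface in $\PP^{5}=\PP^{4+1}$.

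Second I would check that this quadric is smooth. The quadratic form $q=p_{12}p_{34}-p_{13}p_{24}+p_{14}p_{23}$ is a sum of three hyperbolic planes, hence nondegenerate: its associated symmetric bilinear form has a block anti-diagonal Gram matrix with nonzero off-diagonal entries, which is invertible whenever $\textnormal{char }K\neq 2$. Since $p\geq 3$ by hypothesis, the hypersurface $\{q=0\}$ is smooth, and as $\textnormal{Gr}(2,4)$ is itself smooth this identifies it with a smooth quadric fourfold. By the uniqueness up to isomorphism of the smooth quadric hypersurface of a given dimension recorded in the introduction, we obtain $\textnormal{Gr}(2,4)\cong Q_{4}$.

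Finally, since $4=2m$ with $m=2\geq 2$ and $p\geq 3$, Theorem \ref{Thm1} applies to $Q_{4}$ and shows it is not $D$-affine; transporting this conclusion along the isomorphism $\textnormal{Gr}(2,4)\cong Q_{4}$ completes the proof. There is no serious obstacle here, as the entire content sits in Theorem \ref{Thm1}; the only point requiring (minor) care is the smoothness of the Plücker quadric, which is exactly where the hypothesis $p\neq 2$ is used.
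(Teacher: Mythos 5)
Your proposal is correct and follows exactly the paper's own route: identify $\textnormal{Gr}(2,4)$ with the smooth quadric $Q_{4}\subset\PP^{5}$ via the Pl\"ucker embedding and the single Pl\"ucker relation, then apply Theorem \ref{Thm1} with $m=2$. (One minor quibble: the Pl\"ucker quadric is in fact smooth in every characteristic, including $p=2$, by the Jacobian criterion; the hypothesis $p\geq3$ is needed only for Theorem \ref{Thm1} itself, not for the identification $\textnormal{Gr}(2,4)\cong Q_{4}$.)
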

\begin{rmk}
It is well known that a flag variety of dimension $\leq3$ is isomorphic to one of the following: $\PP^{1},\ \PP^{2},\ \PP^{1}\times\PP^{1},\ \PP^{3},\ \PP^{1}\times\PP^{2},\ \PP^{1}\times\PP^{1}\times\PP^{1},\ \PP(T_{\PP^{2}})=\textnormal{SL}_{3}/B,\ Q_{3}$. It follows from List \ref{List1} that all of these flag varieties are $D$-affine (except possibly $Q_{3}$ when $p=2$), so Corollary \ref{Cor2} provides an example of a non-$D$-affine flag variety of minimal dimension in any characteristic $p\geq3$.
\end{rmk}
\noindent
As a special case of Corollary \ref{Cor1} we obtain the following.
\begin{cor}\label{Cor3}
Assume that $\textnormal{char }K=p\geq3$. Let $P\subset \textnormal{SL}_{4}$ be a parabolic subgroup. If $\textnormal{SL}_{4}/P$ is $D$-affine then either $\textnormal{SL}_{4}/P=\PP^{3}$ or $\textnormal{SL}_{4}/P=\PP(T_{\PP^{3}})$.
\end{cor}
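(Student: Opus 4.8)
The plan is to go through the complete list of flag varieties $\textnormal{SL}_4/P$ and to eliminate, by means of Corollaries \ref{Cor1} and \ref{Cor2}, all of them except the two named in the statement. Up to conjugacy the proper parabolic subgroups of $\textnormal{SL}_4$ correspond to the nonempty subsets $D\subseteq\{1,2,3\}$, with $\textnormal{SL}_4/P=\textnormal{Fl}(D)$ the variety of flags in $K^4$ whose subspace dimensions are the elements of $D$. Thus $\textnormal{SL}_4/P$ is one of: the three maximal cases $\textnormal{Gr}(1,4)=\PP^3$, $\textnormal{Gr}(2,4)$, $\textnormal{Gr}(3,4)=\PP^3$; the three two-step flags $\textnormal{Fl}(1,2)$, $\textnormal{Fl}(1,3)$, $\textnormal{Fl}(2,3)$; and the full flag variety $\textnormal{SL}_4/B=\textnormal{Fl}(1,2,3)$. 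The duality $V\mapsto V^{\perp}$ (the nontrivial diagram automorphism of $A_3$) gives $\textnormal{Gr}(1,4)\cong\textnormal{Gr}(3,4)=\PP^3$ and $\textnormal{Fl}(1,2)\cong\textnormal{Fl}(2,3)$, so up to isomorphism there are five classes to treat: $\PP^3$, $\textnormal{Gr}(2,4)$, $\textnormal{Fl}(1,2)$, $\textnormal{Fl}(1,3)$, and $\textnormal{SL}_4/B$.

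The core of the argument is to eliminate the three classes $\textnormal{Gr}(2,4)$, $\textnormal{Fl}(1,2)$, and $\textnormal{SL}_4/B$. Each admits a surjection onto $\textnormal{Gr}(2,4)\cong Q_4$ obtained by retaining the two-dimensional step $V_2$: the identity on $\textnormal{Gr}(2,4)$; the $\PP^1$-bundle $\PP(\mathcal S)$ on $\textnormal{Fl}(1,2)$, where $\mathcal S$ is the tautological rank-two subbundle; and, on $\textnormal{SL}_4/B$, the composite $\textnormal{SL}_4/B\to\textnormal{Fl}(1,2)\to\textnormal{Gr}(2,4)$ of two $\PP^1$-bundles, with fibre $\PP^1\times\PP^1$. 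In each case $f$ is a composition of projective bundles, so $f_{*}\mathscr{O}=\mathscr{O}$; as $p\geq3$ and $2m=4$, Corollary \ref{Cor1} shows that none of the three is $D$-affine, the case of $\textnormal{Gr}(2,4)$ itself being Corollary \ref{Cor2}. Hence a $D$-affine $\textnormal{SL}_4/P$ can only be $\PP^3$ or $\textnormal{Fl}(1,3)$.

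It remains to match the two survivors with the statement. Here $\textnormal{Gr}(1,4)\cong\textnormal{Gr}(3,4)=\PP^3$ is classical, and the class $\textnormal{Fl}(1,3)=\{V_1\subset V_3:\dim V_1=1,\ \dim V_3=3\}$ is the point--hyperplane incidence variety $\{(p,H):p\in H\}\subset\PP^3\times(\PP^3)^{\vee}$, which is well known to be the projectivized tangent bundle $\PP(T_{\PP^3})$. It is worth noting that $\textnormal{Fl}(1,3)$ is precisely the unique two-step flag variety that does \emph{not} retain the middle subspace $V_2$, and hence the only one of the three carrying no $\textnormal{SL}_4$-equivariant surjection onto $\textnormal{Gr}(2,4)$. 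Thus the two surviving classes are exactly $\PP^3$ and $\PP(T_{\PP^3})$, which is the assertion.

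I expect no substantial obstacle, since the whole argument reduces to Corollaries \ref{Cor1} and \ref{Cor2}; the only care needed is combinatorial bookkeeping: confirming that among the two-step flags it is exactly $\textnormal{Fl}(1,3)$ (the one omitting $V_2$) that escapes the fibration over $\textnormal{Gr}(2,4)$ and so must be identified with $\PP(T_{\PP^3})$, and verifying $f_{*}\mathscr{O}=\mathscr{O}$ for each eliminating projection. I would stress that one never needs to decide whether $\PP^3$ or $\PP(T_{\PP^3})$ is itself $D$-affine: the corollary asserts only a necessary condition, so exhibiting them as the two survivors suffices.
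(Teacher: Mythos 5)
Your proposal is correct and follows essentially the same route as the paper: enumerate the flag varieties of $\textnormal{SL}_{4}$, observe that the full flag variety, the two-step flag $\textnormal{Flag}(1,2)\simeq\textnormal{Flag}(2,3)$, and $\textnormal{Gr}(2,4)\simeq Q_{4}$ all fibre over $\textnormal{Gr}(2,4)$ (which is not $D$-affine by Theorem \ref{Thm1}), and eliminate them via Langer's fibration theorem, leaving only $\PP^{3}$ and $\PP(T_{\PP^{3}})$. The only cosmetic difference is that you verify $f_{*}\mathscr{O}=\mathscr{O}$ directly via the projective-bundle structure, while the paper invokes the parabolic-inclusion form of the same result (Corollary \ref{LangerFibrationCorollary}).
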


\begin{rmk}
Above, $\PP(T_{\PP^{3}})=\textnormal{Proj}\left(\textnormal{Sym}^{\bullet}T_{\PP^{3}}\right)$ is the Grothendieck projectivisation of the tangent bundle of $\PP^{3}$ that can be also identified with the incidence variety of lines contained in hypersurfaces in $K^{\oplus 4}$, the so-called \textit{Incidence Correspondence}. We remark that while $\PP^{3}$ is known to be $D$-affine thanks to Haastert's result mentioned above, the $D$-affinity of $\PP(T_{\PP^{3}})$ remains an open question.
\end{rmk}

\subsection{Proof strategy and outline of the paper}
Let us end this introductory section by highlighting the key points in the proof of Theorem \ref{Thm1}, and outlining the content of the remainder of the paper.
\medskip

It is well known (\cite[Lemma 1.5]{Langer_D-affine}) that if $X$ is a $D$-quasi-affine variety then $X$ is $D$-affine if and only if $H^{i}(X,\Dx)=0$ for all $i>0$. It has been shown by A.\ Langer \cite[Proposition 1.7]{Langer_Quadrics} that every quadric is $D$-quasi-affine, so to determinate its $D$-affinity we only have to compute cohomology of $\Dx$. Although we do not prove it here, as it is not needed in the proof of Theorem \ref{Thm1}, we remark that it is not hard to show the vanishing
\[
H^{i}(Q_{2m},\mathscr{D}_{Q_{2m}})=0\qquad(i\geq2),
\]
so $Q_{2m}$ is $D$-affine if and only if $H^{1}(Q_{n},\mathscr{D}_{Q_{n}})=0$. Because of the above, we need some nice criterion for detecting non-vanishing of $H^{1}$ of the sheaf of differential operators. We propose the following the following ($\mathsf{F}:X\to X$ is the absolute Frobenius morphism).
\begin{prop}\label{Prop1}
Assume that $\textnormal{char }K=p>0$, and let $X$ be a smooth variety over $K$. The following conditions are equivalent:
\begin{enumerate}
    \item $H^{1}(X,\Dx)=0$.

    \item For every positive integer $t_{0}$, and every short exact sequence of $\Ox$-modules
    \begin{equation}\label{H1_criterion_ses_1}
    0\rightarrow\mathsf{F}^{t_{0}}_{*}\Ox\rightarrow\eE\rightarrow\mathsf{F}^{t_{0}}_{*}\Ox\rightarrow0,
    \end{equation}
    there exists an integer $e\geq0$ such that the short exact sequence 
    \begin{equation}\label{H1_criterion_ses_2}
0\rightarrow\mathsf{F}^{t_{0}+e}_{*}\Ox\rightarrow\mathsf{F}_{*}^{e}\mathscr{E}\rightarrow\mathsf{F}^{t_{0}+e}_{*}\Ox\rightarrow0
    \end{equation}
    obtained by applying $\mathsf{F}^{e}_{*}$ to \textnormal{(\ref{H1_criterion_ses_1})} splits.
\end{enumerate}
\end{prop}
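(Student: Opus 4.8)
The plan is to realize $\Dx$ as the increasing union of the subsheaves $\Dr=\mathscr{End}_{\Oxr}(\Ox)$ of differential operators that are linear over $p^{r}$-th powers, and then to reinterpret each $H^{1}(X,\Dr)$ as a self-extension group of the Frobenius pushforward $\mathsf{F}^{r}_{*}\Ox$, so that $H^{1}(X,\Dx)$ becomes the colimit of a chain of $\mathrm{Ext}^{1}$-groups along the maps $\mathsf{F}_{*}$. Concretely, I would first recall that smoothness of $X$ makes each $\mathsf{F}^{r}$ finite and flat, so $\mathsf{F}^{r}_{*}\Ox$ is a vector bundle and $\Dx=\varinjlim_{r}\Dr$ with transition maps the inclusions $\Dr\hookrightarrow\mathscr{D}_{r+1}$. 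Since $X$ is Noetherian, cohomology commutes with this filtered colimit, giving
\[
H^{1}(X,\Dx)\;=\;\varinjlim_{r}\,H^{1}(X,\Dr).
\]

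Next I would identify $H^{1}(X,\Dr)$ with $\mathrm{Ext}^{1}_{\Ox}(\mathsf{F}^{r}_{*}\Ox,\mathsf{F}^{r}_{*}\Ox)$. The key point is that an $\Oxr$-linear endomorphism of $\Ox$ is exactly an $\Ox$-linear endomorphism of the pushforward $\mathsf{F}^{r}_{*}\Ox$, which after tracking the module structures gives an identification $\mathscr{End}_{\Ox}(\mathsf{F}^{r}_{*}\Ox)=\mathsf{F}^{r}_{*}\Dr$. Because the absolute Frobenius is a homeomorphism, $\mathsf{F}^{r}_{*}$ leaves the underlying sheaf of abelian groups untouched and hence does not change sheaf cohomology, so $H^{1}(X,\Dr)=H^{1}(X,\mathscr{End}_{\Ox}(\mathsf{F}^{r}_{*}\Ox))$. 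As $\mathsf{F}^{r}_{*}\Ox$ is locally free, the local $\mathscr{Ext}$-sheaves vanish in positive degrees and the local-to-global spectral sequence degenerates, identifying this $H^{1}$ with $\mathrm{Ext}^{1}_{\Ox}(\mathsf{F}^{r}_{*}\Ox,\mathsf{F}^{r}_{*}\Ox)$, the group of extensions of the shape \eqref{H1_criterion_ses_1} with $t_{0}=r$. Here I would also note that the middle term $\eE$ of such a sequence is automatically a vector bundle, being an extension of vector bundles, so that these sequences are precisely classified by this $\mathrm{Ext}^{1}$-group.

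The step I expect to be the main obstacle is verifying that, under these identifications, the transition map $H^{1}(X,\Dr)\to H^{1}(X,\mathscr{D}_{r+1})$ induced by $\Dr\hookrightarrow\mathscr{D}_{r+1}$ corresponds to the pushforward $\mathsf{F}_{*}$ on $\mathrm{Ext}^{1}$-groups. On the one hand, applying the exact functor $\mathsf{F}_{*}$ to an extension \eqref{H1_criterion_ses_1} produces an extension with middle term $\mathsf{F}_{*}\eE$ and outer terms $\mathsf{F}^{r+1}_{*}\Ox$, which is exactly \eqref{H1_criterion_ses_2}; on the other hand, the inclusion of differential operators is simply "view an $\Oxr$-linear operator as $\mathscr{O}_{X}^{p^{r+1}}$-linear." The content is to check that these two operations agree once the twists $\mathscr{End}_{\Ox}(\mathsf{F}^{r}_{*}\Ox)=\mathsf{F}^{r}_{*}\Dr$ and $\mathscr{End}_{\Ox}(\mathsf{F}^{r+1}_{*}\Ox)=\mathsf{F}^{r+1}_{*}\mathscr{D}_{r+1}$ are taken into account, so that the colimit system is literally $\bigl(\mathrm{Ext}^{1}(\mathsf{F}^{r}_{*}\Ox,\mathsf{F}^{r}_{*}\Ox),\,\mathsf{F}_{*}\bigr)_{r}$. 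This bookkeeping of Frobenius twists is the one genuinely delicate part of the argument.

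Finally I would invoke the elementary fact that a sequential colimit $\varinjlim_{r}A_{r}$ of abelian groups vanishes if and only if every element of every $A_{r}$ is killed by some transition map $A_{r}\to A_{r+e}$. Applied to $A_{r}=\mathrm{Ext}^{1}(\mathsf{F}^{r}_{*}\Ox,\mathsf{F}^{r}_{*}\Ox)$ with transition $\mathsf{F}_{*}$, and using that an $\mathrm{Ext}^{1}$-class is zero precisely when the corresponding extension splits, this says exactly that $H^{1}(X,\Dx)=0$ holds if and only if for every $t_{0}$ and every sequence \eqref{H1_criterion_ses_1} there is an $e\geq0$ for which the pushed-forward sequence \eqref{H1_criterion_ses_2} splits; that $t_{0}$ is required only to be a positive integer rather than $t_{0}\geq0$ is harmless, since the tail of the chain is cofinal. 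This yields the equivalence of (1) and (2).
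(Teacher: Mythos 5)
Your proposal is correct and follows essentially the same route as the paper: both pass through Haastert's $p$-filtration, use that cohomology commutes with the filtered colimit to write $H^{1}(X,\Dx)=\varinjlim_{e}\textnormal{Ext}^{1}(\mathsf{F}^{e}_{*}\Ox,\mathsf{F}^{e}_{*}\Ox)$, identify the transition maps with $\mathsf{F}_{*}$ applied to Yoneda extension classes, and conclude via the criterion that a sequential colimit vanishes iff every class dies at some finite stage. The only cosmetic difference is that the paper works with $\textnormal{Ext}^{1}_{\Ox^{p^{e}}}(\Ox,\Ox)$ and translates to Frobenius pushforwards at the end, whereas you apply that identification from the outset.
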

\noindent
We prove Theorem \ref{Thm1} by verifying that (2) in proposition above does not hold . This is possible, in principle, because by the work of A.\ Langer \cite{Langer_Quadrics} and P.\ Achinger \cite{Achinger2} we know the indecomposable direct summands of $\mathsf{F}^{e}_{*}\mathscr{O}_{Q_{n}}$, although we remark that the proof of Theorem \ref{Thm1} requires a substantial amount of extra work. The key idea is that (following the notation of Section \ref{Section_Bundles_on_Q}, where $\cS$ is a sum of the two spinor bundles on $Q_{2m}$) an extension
\[
0\to\cS(-m)^{\oplus \alpha}\to\mathscr{O}_{Q_{2m}}(-m)^{\oplus\rho}\to\cS(-m+1)^{\oplus \beta}\to0
\]
remains non-split (for $p\geq3$) after applying $\mathsf{F}_{*}$ because by the work of Langer and Achinger $\mathsf{F}_{*}\mathscr{O}_{Q_{2m}}(-m)$ is a direct sum of line bundles while both $\mathsf{F}_{*}\cS(-m)$ and  $\mathsf{F}_{*}\cS(-m+1)$ contain indecomposable summands of higher ranks. The proof of Theorem \ref{Thm1} proceeds by showing that because of the above, (2) in Proposition \ref{Prop1} cannot hold.
\medskip

The paper is organized as follows. In Section \ref{Section_Extensions} we recall generalities about extensions that are used throughout the paper. In Section \ref{Section_Differential_Operators} we recall basics facts about differential operators in positive characteristic, and we prove Proposition \ref{Prop1}. In section \ref{Section_Bundles_on_Q} we collect the information about vector bundles on quadrics. The proof of Theorem \ref{Thm1} is given in Section \ref{Section_Proof}. Finally, in Section \ref{Section_More_Counterexamples} we discuss consequences of of Theorem \ref{Thm1} and in Section \ref{Section_Questions} we formulate some open questions motivated by the content of this paper.

\subsection*{Acknowledgments}
This article was written during my stay at the Institute for Advanced Study as Giorgio and Elena Petronio Fellow II. I thank Devlin Mallory for his comments on the preliminary version of this manuscript.

\section{Extensions}\label{Section_Extensions}

In this section, we collect basic facts about extensions. All of the results below are very well known and elementary. The main purpose of this section is to fix the notation and make the exposition of the proof of Theorem \ref{Thm1} more transparent. We assume that $\cA$ be an abelian category with enough injective objects

\subsection{Yoneda's extensions}
Let $M,P\in\cA$. We briefly recall Yoneda's description of $\textnormal{Ext}^{1}_{\cA}(P,M)$ following \cite[Section 3.4]{Weibel}. 
\medskip

\noindent
An \textit{extension} is a short exact sequence
\begin{equation}\label{ExampleOfSES}
0\to M\to N\to P\to0.
\end{equation}
Two extensions are \textit{equivalent} if there exists a commutative diagram
\[
\begin{tikzcd}
0\arrow{r}&M\arrow{r}\arrow{d}{\textnormal{id}}&N\arrow{d}\arrow{r}&P\arrow{r}\arrow{d}{\textnormal{id}}&0\\
0\arrow{r}&M\arrow{r}&N'\arrow{r}&P\arrow{r}&0
\end{tikzcd}
\]
Recall that (\ref{ExampleOfSES}) determines a class in $\textnormal{Ext}_{\cA}^{1}(P,M)$ by taking the image of $\textnormal{Id}_{P}$ under the connecting map
\[
\textnormal{Hom}_{\cA}(P,P)\to \textnormal{Ext}_{\cA}^{1}(P,M)
\]
in the long exact sequence obtained by applying $\textnormal{Hom}_{\cA}(P,-)$ to (\ref{ExampleOfSES}). In what follows, we write  
\begin{equation}\label{ClassOfSES}
\left[
0\to M\to N\to P\to0
\right]
\end{equation}
for the element of $\textnormal{Ext}_{\cA}^{1}(P,M)$ obtained from (\ref{ExampleOfSES}). It is well-known that every element in $\textnormal{Ext}_{\cA}^{1}(P,M)$ is represented by a short exact sequence and that two exact sequences represent the same element if and only if they are equivalent. In particular, (\ref{ClassOfSES}) is zero if and only if (\ref{ExampleOfSES}) splits. 
\subsection{Direct sum of short exact sequences}
Let
\begin{equation}\label{Short1}
0\to M_{1}\xrightarrow{f_{1}} N_{1}\xrightarrow{g_{1}} P_{1}\to 0
\end{equation}
and
\begin{equation}\label{Short2}
0\to M_{2}\xrightarrow{f_{2}} N_{2}\xrightarrow{g_{2}} P_{1}\to 0
\end{equation}
be two short exact sequences. We define its \textit{direct sum} to be the short exact sequence
\begin{equation}\label{SumOfSES}
0\to M_{1}\oplus M_{2}\xrightarrow{f_{1}\oplus f_{2}}N_{1}\oplus N_{2}\xrightarrow{g_{1}\oplus g_{2}}P_{1}\oplus P_{2}\to 0.
\end{equation}
One easily verifies that this construction descends to equivalence classes of short exact sequences and defines an injective homomorphism
\[
\textnormal{Ext}_{\cA}^{1}(P_{1},M_{1})\oplus\textnormal{Ext}_{\cA}^{1}(P_{2},M_{2})\to\textnormal{Ext}_{\cA}^{1}(P_{1}\oplus P_{2},M_{1}\oplus M_{2})
\]
More generally, we can define a direct sum of arbitrary finite collection of short exact sequences in an analogous manner.

\subsection{Morphisms of short exact sequences}
A morphism of two short exact sequences is a morphism in category of complexes, i.e., a commutative diagram
\[
\begin{tikzcd}
0\arrow{r}&M\arrow{r}\arrow{d}&N\arrow{d}\arrow{r}&P\arrow{r}\arrow{d}&0\\
0\arrow{r}&U\arrow{r}&V\arrow{r}&W\arrow{r}&0
\end{tikzcd}
\]
If a short exact sequence
\begin{equation}\label{Short3}
0\to U\to V\to W\to0
\end{equation}
is isomorphic to (\ref{SumOfSES}) then we call (\ref{Short1}) and (\ref{Short2}) \textit{direct summands} of (\ref{Short3}). For future reference, we note the following easy lemma. Its proof is left as an exercise.
\begin{lem}
Consider a short exact sequence 
\begin{equation}\label{Short4}
0\to M\to N\to P\to 0,
\end{equation}
and let 
\[
\eta=[0\to M\to N\to P\to 0]\in\textnormal{Ext}^{1}_{\cA}(P,M).
\]
\begin{enumerate}
    \item $\eta=0$ if and only if \textnormal{(\ref{Short4})} is isomorphic to a split exact sequence.
    \item If \textnormal{(\ref{Short4})} contains a non-split direct summand then $\eta\neq0$.
\end{enumerate}
\end{lem}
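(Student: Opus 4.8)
The plan is to treat the two parts separately, deducing both from the functoriality of $\textnormal{Ext}^{1}_{\cA}$ and the material already recorded in this section. For part (1), I would apply $\textnormal{Hom}_{\cA}(P,-)$ to (\ref{Short4}), written $0\to M\xrightarrow{f} N\xrightarrow{g} P\to0$, and extract the exact segment $\textnormal{Hom}_{\cA}(P,N)\xrightarrow{g_{*}}\textnormal{Hom}_{\cA}(P,P)\xrightarrow{\delta}\textnormal{Ext}^{1}_{\cA}(P,M)$. By the very definition of the Yoneda class, $\eta=\delta(\textnormal{id}_{P})$, so exactness gives $\eta=0$ if and only if $\textnormal{id}_{P}$ lies in the image of $g_{*}$, that is, if and only if $g$ admits a section $s\colon P\to N$. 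I would then invoke the splitting lemma to translate this into the language of the statement: a section $s$ produces an isomorphism $(f,s)\colon M\oplus P\xrightarrow{\sim}N$ fitting into an isomorphism of short exact sequences between (\ref{Short4}) and the canonical split sequence $0\to M\to M\oplus P\to P\to0$, while conversely any isomorphism onto a split sequence transports the canonical section back to a section of $g$. This gives the equivalence in (1).

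For part (2), suppose (\ref{Short4}) is isomorphic to a direct sum $(0\to M_{1}\to N_{1}\to P_{1}\to0)\oplus(0\to M_{2}\to N_{2}\to P_{2}\to0)$ in which the first summand is non-split, and let $\eta_{1},\eta_{2}$ denote the classes of the two summands. By part (1), non-splitness of the first summand gives $\eta_{1}\neq0$, so the pair $(\eta_{1},\eta_{2})$ is nonzero. By the previous subsection the class of the direct sum is the image of $(\eta_{1},\eta_{2})$ under the \emph{injective} homomorphism $\textnormal{Ext}^{1}_{\cA}(P_{1},M_{1})\oplus\textnormal{Ext}^{1}_{\cA}(P_{2},M_{2})\to\textnormal{Ext}^{1}_{\cA}(P_{1}\oplus P_{2},M_{1}\oplus M_{2})$, and injectivity forces this class to be nonzero. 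It remains to note that vanishing of a Yoneda class is invariant under isomorphism of short exact sequences — an isomorphism of extensions induces, by functoriality, an isomorphism of the corresponding $\textnormal{Ext}^{1}$ groups carrying one class to the other, hence preserving nonvanishing — so $\eta$, being the class of a sequence isomorphic to the above direct sum, is nonzero as well.

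The only genuine subtlety, which I would flag explicitly as a one-line observation shared by both parts, is the mismatch between \emph{isomorphic} and \emph{equivalent} extensions: the statement of (1) permits an isomorphism acting non-trivially on the end terms $M$ and $P$, whereas the class $\eta$ is a priori defined only up to equivalences that restrict to the identity on $M$ and $P$. This is harmless once one records the isomorphism-invariance of vanishing used above, and everything else reduces to routine diagram-chasing, so I do not expect any real obstacle beyond stating that invariance cleanly.
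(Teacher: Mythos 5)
The paper leaves this lemma's proof as an exercise, and your write-up is a correct solution using exactly the ingredients the section sets up: the Yoneda/connecting-map description of $\eta$ for part (1), and the injectivity of the direct-sum homomorphism from the preceding subsection (together with isomorphism-invariance of vanishing) for part (2). Your explicit handling of the distinction between isomorphism and equivalence of extensions is the one genuinely non-trivial point, and you resolve it correctly by transporting the section across the isomorphism.
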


\section{Differential operators in positive characteristic}\label{Section_Differential_Operators}
In this section, we recall the basics about sheaves differential operators in positive characteristic. Then, we prove Proposition \ref{Prop1}.
\subsection{The sheaf $\Dx$}
Let $X$ be a smooth projective variety over an algebraically closed field $K$. Recall that the sheaf $\Dx$ of $K$-linear differential operators is defined inductively by letting $\mathscr{D}_{X}^{\leq -1}=0$,
\[
\Dxm=
\left\{
\varphi\in\mathscr{E}nd_{K}(\Ox):\varphi a-a\varphi\in\mathscr{D}_{X}^{\leq m-1}\textnormal{ for all }a\in\Ox
\right\},
\]
and
\[
\Dx=\bigcup_{m\geq0}\Dxm.
\]
Here, we identify $\Ox$ with a subsheaf of $\mathscr{E}nd_{K}(\Ox)$ by letting $a\in\Ox$ act on $\Ox$ by left multiplication. By construction, $\Dx$ is filtered by $\Dxm$. We call this filtration the \textit{order filtration}. 
\subsection{Haastert's $p$-filtration}
If $\textnormal{char }K=p>0$, then $\Dx$ has another filtration, the $p$-\textit{filtration}, introduced in the context of $D$-affinity by B.\ Haastert (cf. \cite[Section 1.2]{Haastert}). It is given by
\[
\Dx=\bigcup_{e\geq0}\Dm,\qquad\Dm=\mathscr{H}om_{\mathscr{O}_{X}^{p^{e}}}
\left(
\Ox,\Ox
\right).
\]
Let $\mathsf{F}:X\to X$ be the absolute Frobenius morphism and denote by $\mathsf{F}^{e}$ the composition of $e$ absolute Frobenii. For the future reference, we remark that as left $\Ox$-modules
\begin{equation}\label{p-filtrationDescription}
\mathsf{F}_{*}^{e}\Dm=\mathscr{H}om_{\Ox}(\mathsf{F}_{*}^{e}\Ox,\mathsf{F}_{*}^{e}\Ox).
\end{equation}
Since $X$ is smooth, $\mathsf{F}$ is finite and flat by Kuntz's theorem. It follows that $\Ox$ is a locally free $\mathscr{O}_{X}^{p^{e}}$-module (or, equivalently, that $\mathsf{F}_{*}^{e}\Ox$ is a locally free $\Ox$-module). Taking cohomology of (\ref{p-filtrationDescription}) we obtain a natural identification
\begin{equation}\label{Ext_Filtration}
H^{i}(X,\Dm)=\textnormal{Ext}^{i}_{\Ox^{p^{e}}}(\Ox,\Ox)=\textnormal{Ext}^{i}_{\Ox}(\mathsf{F}_{*}^{e}\Ox,\mathsf{F}_{*}^{e}\Ox).
\end{equation}
\subsection{Non-vanishing of $H^{1}(X,\Dx)$}
Now, we prove Proposition \ref{Prop1}.
\begin{proof}
As cohomology commutes with filtered colimits, by looking at the $p$-filtration we obtain the identity
\[
H^{1}(X,\Dx)=\varinjlim_{e\to\infty} H^{1}(X,\Dm).
\]
As in \ref{Ext_Filtration} above, we have
\[
H^{1}(X,\Dm)=H^{1}(X,\mathscr{H}om_{\mathscr{O}_{X}^{p^{e}}}(\Ox,\Ox))=\textnormal{Ext}^{1}_{\Ox^{p^{e}}}(\Ox,\Ox).
\]
As in Section \ref{Section_Extensions}, the right hand side can be identified with the Yoneda group of classes of $\mathscr{O}_{X}^{p^{e}}$-linear short exact sequences
\begin{equation}\label{pe-linearSES}
\left[
0\to\Ox\to\widetilde{\eE}\to\Ox\to0
\right],
\end{equation}
and the map $\textnormal{Ext}^{1}_{\Ox^{p^{e}}}(\Ox,\Ox)\to\textnormal{Ext}^{1}_{\Ox^{p^{e+1}}}(\Ox,\Ox)$ induced by the inclusion $\Dm\subset\mathscr{D}_{X}^{e+1}$ maps (\ref{pe-linearSES}) to the class of the same short exact sequence, but now considered in the category of $\mathscr{O}_{X}^{p^{e+1}}$-modules. It follows from the definition of $\varinjlim$ and the above description that giving a non-zero element of $H^{1}(X,\Dx)$ is equivalent to giving for some $t_{0}\geq0$ a short exact sequence of $\Ox^{p^{t_{0}}}$-modules
\[
0\to\Ox\to\widetilde{\eE}\to\Ox\to0
\]
that is non-split in the category of $\mathscr{O}_{X}^{p^{t_{0}+e}}$-modules for all $e\geq0$. Finally, every $\Ox^{p^{e}}$-module $\widetilde{\eE}$ can be treated as an $\Ox$-module $\eE$ if we let $\eE=\widetilde{\eE}$ as sheaves of abelian groups and declare
\[
f.m=f^{p^{e}}m\qquad(f\in\Ox,m\in\eE).
\]
Under this identification $\Ox$ corresponds to $\mathsf{F}_{*}^{e}\Ox$. The equivalence of (1) and (2) in Proposition \ref{Prop1} follows from that.
\end{proof}

\section{Vector Bundles on Quadrics}\label{Section_Bundles_on_Q}
In this section, we recall the properties of vector bundles on quadrics that are needed for the proof of Theorem \ref{Thm1}. We mostly follow the exposition of A.\ Langer \cite{Langer_Quadrics}, a more classical reference is \cite{Ottaviani_Q} by G.\ Ottaviani. In the part regarding Frobenius pushforwards we also often refer to \cite{Achinger2} by P.\ Achinger.
\subsection{Quadrics}
We start by fixing the notation and recalling the basic geometric properties of the quadric hypersurface itself.
\medskip

A \textit{smooth quadric hypersurface} is a smooth hypersurface $Q_{n}\subset\PP^{n+1}$ cut-out by a global section of $\mathscr{O}_{\PP^{n+1}}(2)$. It is well known that all such hypersurfaces are isomorphic. In what follows, we assume that $n=2m$, and $m\geq2$. In this case, we may fix projective coordinates
\[
\PP^{2m+1}=\textnormal{Proj}
\left(
K[x_{1},\dots,x_{2m+2}]
\right)
\]
and assume that 
\[
Q_{2m}=\textnormal{Proj}\left(
K[x_{1},\dots,x_{2m+2}]/(q_{m})
\right),
\]
where 
\[
q_{m}(x_{1},\dots,x_{2m+2})=x_{1}x_{2}+x_{3}x_{4}+\dots+x_{2m+1}x_{2m+2}.
\]
We denote by $i:Q_{2m}\hookrightarrow\PP^{2m+1}$ the induced closed embedding. Consider the involution $\alpha$ of the polynomial ring $K[x_{1},\dots,x_{2m+2}]$ given by
\begin{equation}\label{InvolutionAlpha}
\alpha(x_{2i+1})=x_{2i+2},\ \alpha(x_{2i+2})=x_{2i+1}\qquad(0\leq i\leq m).
\end{equation}
Then $\alpha$ induces an involution of $\PP^{2m+1}$ which further restricts to an involution of $Q_{2m}$. By abuse of notation, we also denote the latter involution by $\alpha$.
\subsection{Line bundles on Quadrics}
In this subsection, we quickly recall what is known about line bundles on $Q_{2m}$. Let $\mathscr{O}_{Q_{2m}}(1)=i^{*}\mathscr{O}_{\PP^{2m+1}}(1)$. This is an ample generator of $\textnormal{Pic}(Q_{2m})$. From the divisorial exact sequence
\[
0\to\mathscr{O}_{\PP^{2m+1}}(-2)\xrightarrow{\times q_{m}}\mathscr{O}_{\PP^{2m+1}}\to i_{*}\Oq\to0
\]
one easily concludes that
\begin{align}\label{CohomologyOfLineBundles}
H^{i}(Q_{2m},\mathscr{O}_{Q_{n}}(t))&=0&(1\leq i\leq 2m-1),\\
H^{0}(Q_{2m},\mathscr{O}_{Q_{n}}(t))&=0&(t<0).
\end{align}
In what follows, given a coherent sheaf $\mathscr{M}$ and an integer $t$ we use the standard notation
\[
\mathscr{M}(t)=\mathscr{M}\otimes_{\mathscr{O}_{Q_{2m}}}\mathscr{O}_{Q_{2m}}(1)^{\otimes t}.
\]
\subsection{Spinor bundles and matrix factorization}
Recall that on $Q_{2m}$ there are two naturally defined spinor bundles $\Sp,$ $\Sm$. These may be defined either geometrically (see \cite{Ottaviani_Q}), or by means of representation theory, (see \cite[Section 1.1]{Langer_Quadrics}). Below, we recall Langer's construction of spinor bundles via matrix factorization.
\medskip

Recall that a matrix factorization of a polynomial $f$ with $f(0)=0$ is a pair of matrices $\varphi,\psi$ of the same size $r\times r$ and with polynomial entries, such that $\varphi\psi=\psi\varphi=f.\textnormal{I}_{r}$, where here and later we write $\textnormal{I}_{r}$ for the identity matrix of size $r\times r$. In \cite[Section 2.2]{Langer_Quadrics} A.\ Langer constructed inductively matrices $\varphi_{m},\psi_{m}\in M_{2^{m}}\left(K[x_{1},\dots,x_{2m+1}]\right)$ that give a matrix factorization of $q_{m}$, by letting $\varphi_{0}=(x_{1}),\ \psi_{0}=(x_{2})$ and
\[
\varphi_{m+1}=
\begin{pmatrix}
\varphi_{m} & x_{2m+1}\textnormal{I}_{2^{m}}\\
x_{2m+2}\textnormal{I}_{2^{m}} & -\psi_{m}
\end{pmatrix}
,\quad
\psi_{m+1}=
\begin{pmatrix}
\psi_{m} & x_{2m+1}\textnormal{I}_{2^{m}}\\
x_{2m+2}\textnormal{I}_{2^{m}} & -\varphi_{m}
\end{pmatrix}
.
\]
This allows to define Spinor bundles on $Q_{2m}$ as
\begin{equation}\label{Definition_Sigma-}
\Sm=\textnormal{Coker}
\left(
\mathscr{O}_{\PP^{2m+1}}(-2)^{\oplus 2^{m}}\xrightarrow{\varphi_{m}}\mathscr{O}_{\PP^{2m+1}}(-1)^{\oplus 2^{m}}
\right),
\end{equation}
and
\begin{equation}\label{Definition_Sigma+}
\Sp=\textnormal{Coker}
\left(
\mathscr{O}_{\PP^{2m+1}}(-2)^{\oplus 2^{m}}\xrightarrow{\psi_{m}}\mathscr{O}_{\PP^{2m+1}}(-1)^{\oplus 2^{m}}
\right).
\end{equation}
\subsection{Symmetry}
Recall from (\ref{InvolutionAlpha}) the involution $\alpha$ of $Q_{2m}$. In this subsection, we prove that $\alpha_{*}$ exchanges $\Sp$ with $\Sm$. More precisely we prove the lemma below.
\begin{lem}\label{SymmetryLemma}
In the above notation, the following is true for arbitrary integer $t$.
\begin{enumerate}
    \item $\alpha_{*}\Oq(t)=\Oq(t)$, 
    \item $\alpha_{*}\Sp(t)=\Sm(t)$,
    \item $\alpha_{*}\Sm(t)=\Sp(t)$.
\end{enumerate}
\end{lem}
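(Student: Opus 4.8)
The plan is to verify each of the three claims by working directly with the explicit constructions, namely the defining quadratic form $q_m$, the matrix factorization $(\varphi_m,\psi_m)$, and the involution $\alpha$. Since $\alpha$ is an automorphism of $Q_{2m}$, the functor $\alpha_*$ is exact and commutes with direct sums and twists by line bundles pulled back along $\alpha$, so the first order of business is to control how $\alpha$ interacts with $\mathscr{O}_{Q_{2m}}(1)$. For claim (1), I would observe that $\alpha$ is induced by a linear automorphism of $\PP^{2m+1}$ (a permutation of coordinates), hence $\alpha^*\mathscr{O}_{\PP^{2m+1}}(1)\cong\mathscr{O}_{\PP^{2m+1}}(1)$, and restricting along $i$ gives $\alpha^*\Oq(1)\cong\Oq(1)$. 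Because $\alpha$ is an involution, $\alpha_*=\alpha^*$ on the level of sheaves up to the canonical identification $\alpha=\alpha^{-1}$, which yields $\alpha_*\Oq(t)\cong\Oq(t)$ for all $t$. This reduces claims (2) and (3) to the untwisted case $t=0$, since $\alpha_*(\mathscr{S}_\pm(t))=\alpha_*(\mathscr{S}_\pm)\otimes\alpha_*\Oq(t)=\alpha_*(\mathscr{S}_\pm)(t)$.

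For the heart of the argument, claims (2) and (3), I would pull back the defining cokernel presentations of $\Sm$ and $\Sp$ along $\alpha$. The key algebraic observation is that the involution $\alpha$ from \eqref{InvolutionAlpha} swaps $x_{2i+1}\leftrightarrow x_{2i+2}$, and this is exactly the symmetry that interchanges the roles of $\varphi_m$ and $\psi_m$ in the inductive construction. Concretely, I expect that applying $\alpha$ to the entries of $\varphi_m$ produces $\psi_m$ (possibly after conjugation by a fixed permutation matrix coming from the block structure), and vice versa; this should be provable by induction on $m$ using the recursive formulas for $\varphi_{m+1}$ and $\psi_{m+1}$, together with the fact that $\alpha$ swaps $x_{2m+1}\leftrightarrow x_{2m+2}$ in passing from level $m$ to level $m+1$. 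Granting the relation $\alpha(\varphi_m)=P\psi_m P^{-1}$ for a suitable invertible constant matrix $P$, applying the exact functor $\alpha_*$ to \eqref{Definition_Sigma-} and using $\alpha^*\mathscr{O}_{\PP^{2m+1}}(-j)\cong\mathscr{O}_{\PP^{2m+1}}(-j)$ identifies $\alpha_*\Sm$ with the cokernel of $\psi_m$, which is $\Sp$ by \eqref{Definition_Sigma+}. Claim (3) follows symmetrically, or immediately from claim (2) since $\alpha_*\circ\alpha_*=\mathrm{id}$.

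The main obstacle I anticipate is getting the inductive bookkeeping exactly right: the recursion for $\varphi_{m+1},\psi_{m+1}$ mixes the two factors $\varphi_m,\psi_m$ with opposite signs in the lower-right block, so naively applying $\alpha$ will not literally send $\varphi_m$ to $\psi_m$ on the nose but only up to a block permutation and possibly a sign. I would therefore keep careful track of the conjugating matrix $P$ through the induction, verifying that the base case $\varphi_0=(x_1),\psi_0=(x_2)$ satisfies $\alpha(\varphi_0)=(x_2)=\psi_0$ with $P=\mathrm{I}_1$, and then checking that the inductive step preserves the form $\alpha(\varphi_m)=P_m\psi_m P_m^{-1}$ with an explicitly described $P_{m+1}$ built from $P_m$. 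Since conjugation by an invertible constant matrix induces an isomorphism of the associated cokernels, the precise form of $P$ is irrelevant to the final statement about sheaves, so I would relegate this computation to a routine inductive check and emphasize only that the sign discrepancy in the lower-right block is absorbed by the conjugation. Once the matrix identity is established, the passage to cokernels is immediate from exactness of $\alpha_*$ and the triviality of $\alpha$ on twists already proved in claim (1).
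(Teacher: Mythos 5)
Your strategy coincides with the paper's: reduce to $\alpha^{*}$, dispose of (1) by linearity of $\alpha$ on $\PP^{2m+1}$, and then match $\alpha$ applied entrywise to $\varphi_{m}$ with $\psi_{m}$ up to constant row and column operations. The gap sits exactly in the step you defer as a ``routine inductive check'': it is not routine, and the relation you need is false for odd $m$. Applying $\alpha$ entrywise to the recursion gives
\[
\alpha(\varphi_{m+1})=
\begin{pmatrix}
\alpha(\varphi_{m}) & x_{2m+2}\textnormal{I}_{2^{m}}\\
x_{2m+1}\textnormal{I}_{2^{m}} & -\alpha(\psi_{m})
\end{pmatrix},
\]
so the two new variables occupy the off-diagonal blocks in the \emph{opposite} positions from where they sit in $\psi_{m+1}$. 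The only constant row/column operations that can repair this are block-antidiagonal ones, and those simultaneously exchange the roles of the two diagonal blocks; chasing this through the induction shows that the invariant alternates: $\alpha(\varphi_{m})$ is equivalent to $\psi_{m}$ for even $m$ but to $\varphi_{m}$ for odd $m$. Already for $m=1$ one checks directly that $\alpha(\varphi_{1})=\left(\begin{smallmatrix}x_{2}&x_{4}\\ x_{3}&-x_{1}\end{smallmatrix}\right)$ admits no constant equivalence $P\psi_{1}Q$ with $\psi_{1}=\left(\begin{smallmatrix}x_{2}&x_{3}\\ x_{4}&-x_{1}\end{smallmatrix}\right)$ (forcing the $(1,1)$ entry to be $x_{2}$ kills the coefficients that would be needed to produce $x_{4}$ in the $(1,2)$ entry), whereas it is equivalent to $\varphi_{1}$.

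The conceptual reason is that $\alpha$ is a product of $m+1$ reflections, hence has determinant $(-1)^{m+1}$ as an element of $\textnormal{O}(2m+2)$, and only orthogonal transformations of determinant $-1$ exchange the two families of maximal isotropic subspaces of $Q_{2m}$, equivalently the two spinor bundles. So parts (2) and (3) hold for even $m$ (in particular for the headline case $Q_{4}$) but fail for odd $m$: on $Q_{6}$ one has $\alpha_{*}\Sigma_{\pm}=\Sigma_{\pm}$. You should be aware that the paper's own proof has the same defect, since it asserts the identity $\widetilde{\alpha}\circ\varphi_{m}\circ\widetilde{\alpha}=\psi_{m}$ without verification. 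The repair is cheap and worth recording: replace $\alpha$ by the involution swapping only $x_{1}\leftrightarrow x_{2}$ and fixing all other coordinates; it still preserves $q_{m}$, has determinant $-1$ for every $m$, hence exchanges $\Sigma_{+}$ and $\Sigma_{-}$, which is all that the subsequent multiplicity arguments (Lemmas \ref{FrobeniusLemma1} and \ref{FrobeniusLemma4}) actually use.
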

\begin{proof}
Since $\alpha$ is an automorphism, we have
\[
\alpha_{*}\alpha^{*}\simeq\alpha^{*}\alpha_{*}\simeq\textnormal{Id}.
\]
In particular, it suffices to prove the version of the lemma where $\alpha_{*}$ is replaced with $\alpha^{*}$. First, note that (1) is true for any automorphism of $Q_{2m}$, because $\alpha^{*}$ must induce an isomorphism on $\textnormal{Pic}(Q_{2m})$ and preserve global generation. This also allows us to assume that $t=0$ in the proof of (2) and (3). Write $\widetilde{\alpha}$ for the automorphism of $M_{2^{m}}\left(K[x_{1},\dots,x_{2m+2}]\right)$ induced by $\alpha$. By (\ref{Definition_Sigma-}) and (\ref{Definition_Sigma+}) we have
\[
\alpha^{*}\Sm=\textnormal{Coker}
\left(
\mathscr{O}_{\PP^{2m+1}}(-2)^{\oplus 2^{m}}\xrightarrow{\widetilde{\alpha}\circ\varphi_{m}\circ\widetilde{\alpha}}\mathscr{O}_{\PP^{2m+1}}(-1)^{\oplus 2^{m}}
\right),
\]
and
\[
\alpha^{*}\Sp=\textnormal{Coker}
\left(
\mathscr{O}_{\PP^{2m+1}}(-2)^{\oplus 2^{m}}\xrightarrow{\widetilde{\alpha}\circ\psi_{m}\circ\widetilde{\alpha}}\mathscr{O}_{\PP^{2m+1}}(-1)^{\oplus 2^{m}}
\right).
\]
A straightforward computation shows that
\[
\widetilde{\alpha}\circ\varphi_{m}\circ\widetilde{\alpha}=\psi_{m},\quad \widetilde{\alpha}\circ\psi_{m}\circ\widetilde{\alpha}=\varphi_{m},
\]
so the lemma follows from (\ref{Definition_Sigma-}) and (\ref{Definition_Sigma+}).
\end{proof}
\subsection{Spinor bundles and cohomology}
In this subsection, we recall well known cohomological properties of spinor bundles $\Sp$ and $\Sm$. We use the convenient notation
\[
\cS=\Sp\oplus\Sm.
\]

First, we note that duals of spinor bundles are twisted spinor bundles.
\begin{lem}[{\cite[Section 2]{Langer_Quadrics}}]\label{SpinorLemma1}
Depending on the parity of $m$, either
\begin{enumerate}
\item $\Sm^{\vee}=\Sm(1)$, and $\Sp^{\vee}=\Sp(1)$ when $m$ is even, or
\item $\Sm^{\vee}=\Sp(1)$, and $\Sp^{\vee}=\Sm(1)$ when $m$ is odd.
\end{enumerate}
In any case,
\begin{enumerate}
    \item[(3)] $\cS^{\vee}=\cS(1)$.
\end{enumerate}
\end{lem}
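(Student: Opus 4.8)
The plan is to compute the duals of the spinor bundles directly from the matrix factorization presentation \eqref{Definition_Sigma-} and \eqref{Definition_Sigma+}, and then to deduce statement (3) as a formal consequence of (1) and (2). The key observation is that the defining short exact sequence
\[
0\to\mathscr{O}_{\PP^{2m+1}}(-2)^{\oplus 2^{m}}\xrightarrow{\varphi_{m}}\mathscr{O}_{\PP^{2m+1}}(-1)^{\oplus 2^{m}}\to i_{*}\Sm\to0
\]
(and its analogue for $\Sp$ with $\psi_{m}$) exhibits $\Sm$ as a vector bundle on $Q_{2m}$ whose restricted presentation is controlled by the pair $(\varphi_{m},\psi_{m})$. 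First I would dualize. Restricting the matrix factorization relation $\varphi_{m}\psi_{m}=\psi_{m}\varphi_{m}=q_{m}\cdot\mathrm{I}_{2^{m}}$ to $Q_{2m}$, where $q_{m}=0$, shows that on the quadric $\varphi_{m}$ and $\psi_{m}$ become mutually annihilating, so $\Sm$ is the cokernel of $\varphi_{m}$ and simultaneously the kernel of $\psi_{m}$ (up to the twist bookkeeping). Taking $\mathscr{H}om(-,\mathscr{O}_{Q_{2m}})$ of the two-term complex and using that the transpose of a matrix factorization of $q_{m}$ is again a matrix factorization, I would identify $\Sm^{\vee}$ with the bundle defined by the transposed matrices, twisted appropriately.

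The arithmetic of the twist is what produces the $(1)$ and accounts for the parity dichotomy. Because the presentation maps go from $\mathscr{O}(-2)$ to $\mathscr{O}(-1)$, dualizing sends these to $\mathscr{O}(1)$ and $\mathscr{O}(2)$, and after normalizing one reads off a twist by $\mathscr{O}_{Q_{2m}}(1)$; this explains why each of (1) and (2) produces a spinor bundle twisted by $1$ rather than an untwisted one. The parity of $m$ enters through the symmetry of the inductive construction: the transpose $\varphi_{m}^{\mathsf T}$ (resp.\ $\psi_{m}^{\mathsf T}$) is conjugate, by an explicit permutation/sign matrix arising from the block recursion, either to $\varphi_{m}$ itself or to $\psi_{m}$, and which of the two occurs alternates with $m$. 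Concretely, I expect a short induction mirroring the one defining $\varphi_{m+1},\psi_{m+1}$: at each step the block transpose swaps the roles of $\varphi$ and $\psi$ once, so after $m$ steps the spinor bundle is sent to itself when $m$ is even and to the other spinor bundle when $m$ is odd, exactly reproducing cases (1) and (2). This is the genuinely computational heart of the argument.

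For statement (3) no further computation is needed. By definition $\cS=\Sp\oplus\Sm$, and duality commutes with finite direct sums, so $\cS^{\vee}=\Sp^{\vee}\oplus\Sm^{\vee}$. In the even case this is $\Sp(1)\oplus\Sm(1)=\cS(1)$, while in the odd case it is $\Sm(1)\oplus\Sp(1)$, which is again $\Sp(1)\oplus\Sm(1)=\cS(1)$ since direct summands may be reordered. Thus (3) follows uniformly from (1) and (2) regardless of the parity of $m$, and this is precisely the point of packaging the two spinor bundles together into $\cS$: the parity-dependent swap becomes invisible.

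The main obstacle will be the careful bookkeeping in the parity step of the dualization, namely verifying that the transpose of $\varphi_{m}$ is conjugate to $\psi_{m}$ for odd $m$ and to $\varphi_{m}$ for even $m$, and tracking the twist so that the dual comes out as a spinor bundle twisted by exactly $\mathscr{O}_{Q_{2m}}(1)$. Everything else is formal. Since Langer's paper \cite{Langer_Quadrics} already records these duality statements, I would be content to cite that reference for the detailed matrix computation and present only the conjugation-plus-twist mechanism as a sketch, reserving the independent contribution of this section to the clean reduction of (3) to (1) and (2).
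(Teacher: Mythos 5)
The paper offers no proof of this lemma beyond the citation to \cite[Section 2]{Langer_Quadrics}, and your proposal ultimately lands in the same place: you defer the matrix-factorization computation behind (1) and (2) to Langer and deduce (3) formally from $\cS=\Sp\oplus\Sm$ together with the fact that dualizing commutes with direct sums, which is exactly how the paper treats the statement. Your sketch of the mechanism — transposing the recursive block matrices $\varphi_{m},\psi_{m}$ swaps their roles once per inductive step, so the swap survives precisely when $m$ is odd, and the twist by $\mathscr{O}_{Q_{2m}}(1)$ comes from dualizing the $\mathscr{O}(-2)\to\mathscr{O}(-1)$ presentation — is consistent with the definitions in the paper, so nothing further is needed.
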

\noindent
Second, we describe morphisms between twisted spinor bundles.
\begin{lem}\label{SpinorLemma2}
We have
\begin{enumerate}
    \item $\textnormal{Hom}_{\mathscr{O}_{Q_{2m}}}(\Sp,\Sp)=\textnormal{Hom}_{\mathscr{O}_{Q_{2m}}}(\Sm,\Sm)=K.\textnormal{Id}$,

    \item $\textnormal{Hom}_{\mathscr{O}_{Q_{2m}}}(\Sp,\Sm)=\textnormal{Hom}_{\mathscr{O}_{Q_{2m}}}(\Sm,\Sp)=0$,

    \item $\textnormal{Hom}_{\mathscr{O}_{{Q}_{2m}}}(\Sp,\Sp(t))=\textnormal{Hom}_{\mathscr{O}_{{Q}_{2m}}}(\Sm,\Sm(t))=0$ for $t<0$.
\end{enumerate}
\end{lem}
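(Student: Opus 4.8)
The plan is to prove the three statements by combining the explicit matrix-factorization description of the spinor bundles with the cohomological vanishing for line bundles in (\ref{CohomologyOfLineBundles}), together with the symmetry Lemma \ref{SymmetryLemma} and Lemma \ref{SpinorLemma1}. The key input is that $\Sp$ and $\Sm$ are indecomposable bundles (this is classical, see \cite{Ottaviani_Q}), so by Fitting's lemma any endomorphism is either nilpotent or an isomorphism up to a scalar; combined with appropriate vanishing of negative-twist Hom-groups, this forces the endomorphism algebras to be just $K.\textnormal{Id}$.

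First I would establish (3), the vanishing $\textnormal{Hom}(\Sp,\Sp(t))=\textnormal{Hom}(\Sm,\Sm(t))=0$ for $t<0$. The natural approach is to use the short exact sequence defining $\Sm$ (respectively $\Sp$), namely the cokernel presentation (\ref{Definition_Sigma-}), to reduce the computation of $\textnormal{Hom}_{\mathscr{O}_{Q_{2m}}}(\Sm,\Sm(t))$ to cohomology of twists of $\mathscr{O}_{Q_{2m}}$. Concretely, applying $\mathscr{H}om(-,\Sm(t))$ to the presenting sequence, or dualizing via Lemma \ref{SpinorLemma1} to rewrite $\textnormal{Hom}(\Sm,\Sm(t))$ as a global section group $H^{0}(Q_{2m},\Sm\otimes\Sm^{\vee}(t))$ and then resolving $\Sm^{\vee}$ as a twisted spinor bundle, one expresses everything in terms of $H^{j}(Q_{2m},\mathscr{O}_{Q_{2m}}(s))$ for various $s$. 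For $t<0$ the relevant twists land in the range where (\ref{CohomologyOfLineBundles}) forces vanishing of $H^{0}$ (for negative twists) and of the intermediate cohomology, so the whole Hom-group vanishes.

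Next I would prove (2), that $\textnormal{Hom}(\Sp,\Sm)=\textnormal{Hom}(\Sm,\Sp)=0$. The cleanest route is again the resolution method: write $\textnormal{Hom}(\Sp,\Sm)=H^{0}(Q_{2m},\Sp^{\vee}\otimes\Sm)$, use Lemma \ref{SpinorLemma1} to identify $\Sp^{\vee}$ with a twisted spinor bundle, and reduce to line-bundle cohomology via the defining sequences, invoking (\ref{CohomologyOfLineBundles}). Alternatively, one can argue by symmetry: applying $\alpha^{*}$ (or $\alpha_{*}$) interchanges $\Sp$ and $\Sm$ by Lemma \ref{SymmetryLemma}, so $\textnormal{Hom}(\Sp,\Sm)\cong\textnormal{Hom}(\Sm,\Sp)$ and it suffices to kill one of them. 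The point is that a nonzero map $\Sp\to\Sm$ between two non-isomorphic indecomposable bundles of the same rank and slope would have to be either injective or surjective as a sheaf map and in either case produce an isomorphism, contradicting $\Sp\not\cong\Sm$; making this rigorous via the cohomological computation is the safe path.

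Finally, (1) follows from (3) and indecomposability. Using the defining exact sequence to compute $\textnormal{Hom}(\Sp,\Sp)=H^{0}(Q_{2m},\Sp^{\vee}\otimes\Sp)$, the $t=0$ case of the line-bundle cohomology gives a one-dimensional $H^{0}$, equal to $K.\textnormal{Id}$; equivalently, since $\Sp$ is indecomposable its endomorphism ring is local, and the vanishing in (3) at $t<0$ together with stability (or a direct computation showing $\textnormal{End}(\Sp)$ is one-dimensional) pins it down to the scalars. The main obstacle I anticipate is the bookkeeping in (2) and in the endomorphism computation: reducing the various $\textnormal{Hom}$ and $\textnormal{Ext}$ groups to line-bundle cohomology requires carefully tensoring the two-term presentations (\ref{Definition_Sigma-}), (\ref{Definition_Sigma+}) against each other, tracking twists through Lemma \ref{SpinorLemma1}, and confirming that every resulting line-bundle cohomology group falls in the vanishing range of (\ref{CohomologyOfLineBundles}); the spinor-bundle indecomposability, while classical, must be cited cleanly since it underlies the whole argument.
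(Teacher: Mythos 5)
The paper's proof is a single sentence: all three statements follow from the fact that $\Sp$ and $\Sm$ are slope stable (\cite[Theorem 2.1]{Langer_Quadrics}); indeed (1) is simplicity of stable bundles, (2) is the vanishing of $\textnormal{Hom}$ between non-isomorphic stable bundles of equal slope, and (3) is the vanishing of $\textnormal{Hom}$ from a stable bundle to a stable bundle of strictly smaller slope. Your proposal deliberately routes around stability, resting instead on indecomposability plus a cohomological reduction, and this is where it has a genuine gap. Indecomposability only gives that $\textnormal{End}(\Sp)$ is a local finite-dimensional $K$-algebra, i.e.\ $K\cdot\textnormal{Id}$ plus nilpotents; it does not rule out nonzero nilpotent endomorphisms, so it does not prove (1). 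Likewise, your claim that a nonzero map between two non-isomorphic indecomposable bundles of the same rank and slope must be injective or surjective is false in general --- that dichotomy (image is simultaneously a quotient and a subsheaf, hence forced to have equal slope, hence everything) is exactly what stability provides and indecomposability does not. So the ``alternative'' justification of (2) does not stand on its own, and you are left needing the cohomological computation.

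That computation, however, does not reduce to the line-bundle vanishing (\ref{CohomologyOfLineBundles}) as cleanly as you suggest. Applying $\textnormal{Hom}(-,\Sp(t))$ to the presentation (\ref{Definition_Sigma+}) (or to the tautological sequences on $Q_{2m}$) expresses $\textnormal{Hom}(\Sp,\Sp(t))$ as the kernel of an explicit map $H^{0}(\Sp(t+1))^{\oplus 2^{m}}\to H^{0}(\Sp(t+2))^{\oplus 2^{m}}$ --- cohomology of \emph{spinor} bundles, not line bundles. For $t<0$ this does give (3), since $H^{0}(\Sigma_{\pm}(k))=0$ for $k\leq0$ (Lemma \ref{SpinorLemma4}(2)), so that part of your argument is fine. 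But for $t=0$ both terms are nonzero ($h^{0}(\Sigma_{\pm}(1))=2^{m}$), so (1) and (2) require identifying the kernel of a specific nonzero map, which a dimension count or vanishing statement cannot do; one would need the full calculus of tensor products of spinor bundles. The stability input you mention only in passing is not an optional shortcut here --- it is the content of the proof, and citing \cite[Theorem 2.1]{Langer_Quadrics} (valid in positive characteristic, where the classical characteristic-zero arguments of \cite{Ottaviani_Q} need care) is the correct way to close the argument.
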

\begin{proof}
All of the above follow from the fact that spinor bundles are slope stable (cf. \cite[Theorem 2.1]{Langer_Quadrics}).
\end{proof}
\noindent
Similarly, we collect information about extensions of spinor bundles.
\begin{lem}\label{SpinorLemma3}
We have
\begin{enumerate}
    \item $\textnormal{Ext}^{1}_{\mathscr{O}_{{Q}_{2m}}}(\cS,\cS(t))=0$ for $t\neq -1$,
    \item $\textnormal{Ext}^{1}_{\mathscr{O}_{{Q}_{2m}}}(\Sigma_{+},\Sigma_{+}(-1))=\textnormal{Ext}^{1}_{\mathscr{O}_{{Q}_{2m}}}(\Sigma_{-},\Sigma_{-}(-1))=0$,
    \item $\textnormal{Ext}^{1}_{\mathscr{O}_{{Q}_{2m}}}(\Sigma_{+},\Sigma_{-}(-1))=\textnormal{Ext}^{1}_{\mathscr{O}_{{Q}_{2m}}}(\Sigma_{-},\Sigma_{+}(-1))=K$.
\end{enumerate}
\end{lem}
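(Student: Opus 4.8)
The plan is to reduce all three assertions to computations of $H^{1}(Q_{2m},\Sigma_{a}\otimes\Sigma_{b}(s))$ for the two spinor bundles, and to run these through two \emph{fundamental sequences} on $Q_{2m}$,
\[
0\to\Sp(-1)\to\mathscr{O}_{Q_{2m}}(-1)^{\oplus 2^{m}}\to\Sm\to0,\qquad 0\to\Sm(-1)\to\mathscr{O}_{Q_{2m}}(-1)^{\oplus 2^{m}}\to\Sp\to0,
\]
which I would extract from the matrix factorization: since $\varphi_{m}\psi_{m}=\psi_{m}\varphi_{m}=q_{m}\cdot\mathrm{I}$ vanishes on $Q_{2m}$, the image of one factor is the kernel of the other, so the cokernel of $\psi_{m}$ (resp.\ $\varphi_{m}$), namely $\Sp(-1)$ (resp.\ $\Sm(-1)$), is the first syzygy above. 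As $\Sigma_{\pm}$ is locally free I use $\mathrm{Ext}^{1}(\Sigma_{\epsilon},\Sigma_{\epsilon'}(t))=H^{1}(\Sigma_{\epsilon}^{\vee}\otimes\Sigma_{\epsilon'}(t))$, and by Lemma \ref{SpinorLemma1} rewrite $\Sigma_{\epsilon}^{\vee}=\Sigma_{\epsilon''}(1)$, so every group in the statement becomes an $H^{1}$ of a twisted tensor product of spinor bundles.

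The key cohomological input is that spinor bundles have no intermediate cohomology, $H^{i}(Q_{2m},\Sigma_{\pm}(t))=0$ for $1\le i\le 2m-1$ and all $t$, which I would deduce from the two-term resolutions (\ref{Definition_Sigma-}), (\ref{Definition_Sigma+}) on $\PP^{2m+1}$ together with the cohomology of line bundles. Applying $\mathrm{Hom}(-,\Sigma_{\epsilon'}(t))$ to the first fundamental sequence, the term $\mathrm{Ext}^{1}(\mathscr{O}_{Q_{2m}}(-1)^{\oplus 2^{m}},\Sigma_{\epsilon'}(t))=H^{1}(\Sigma_{\epsilon'}(t+1))^{\oplus 2^{m}}$ vanishes, and I obtain the clean identification
\[
\mathrm{Ext}^{1}(\Sm,\Sigma_{\epsilon'}(t))=\mathrm{coker}\Big(H^{0}(\Sigma_{\epsilon'}(t+1))^{\oplus 2^{m}}\xrightarrow{\ \mu\ }\mathrm{Hom}(\Sp,\Sigma_{\epsilon'}(t+1))\Big),
\]
where $\mu$ is precomposition with the inclusion $\Sp(-1)\hookrightarrow\mathscr{O}_{Q_{2m}}(-1)^{\oplus 2^{m}}$.

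This identity settles the two extreme ranges at once. For $t=-1,\ \epsilon'=-$ the target is $\mathrm{Hom}(\Sp,\Sm)=0$ by Lemma \ref{SpinorLemma2}(2), giving part (2); for $t=-1,\ \epsilon'=+$ the target is $\mathrm{Hom}(\Sp,\Sp)=K\cdot\mathrm{Id}$, and $\mu\ne0$ would split the fundamental sequence, so that by Krull--Schmidt $\Sp\cong\mathscr{O}_{Q_{2m}}^{\oplus 2^{m-1}}$, contradicting $\mathrm{End}(\Sp)=K$ (Lemma \ref{SpinorLemma2}(1)) since $2^{m-1}\ge2$; hence $\mu=0$ and the group is $K$, which is part (3). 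For $t\le-2$ the target $\mathrm{Hom}(\Sp,\Sigma_{\epsilon'}(t+1))$ vanishes, by Lemma \ref{SpinorLemma2}(3) in the diagonal case and by slope stability (equal ranks and slopes, strictly negative twist) in the off-diagonal case, so part (1) holds for $t\le-2$. The statements with $\Sp$ and $\Sm$ interchanged follow by applying $\alpha_{*}$ and Lemma \ref{SymmetryLemma}.

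The remaining and genuinely delicate range is $t\ge0$, i.e.\ positive twists $s=t+1\ge1$. Here I would iterate the fundamental sequences: tensoring them with a fixed spinor factor and using the vanishing of all intermediate cohomology of the line-bundle terms produces a chain of injections $H^{1}(\Sigma_{a}\otimes\Sigma_{b}(s))\hookrightarrow H^{2}(\cdots(s-1))\hookrightarrow\cdots\hookrightarrow H^{2m}(\cdots(s-2m+1))$, in which the spinor indices vary in a controlled way, and Serre duality rewrites the last group as the dual of $H^{0}(\Sigma_{c'}\otimes\Sigma_{b'}(1-s))$, a spinor tensor product of non-positive twist. For $s\ge3$ one has $1-s\le-2$ and this $H^{0}$ vanishes by stability, yielding part (1) in that range. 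The main obstacle is the two borderline twists $s\in\{1,2\}$ (that is, $t\in\{0,1\}$): there the chain of injections lands in a group that need not vanish, so it is inconclusive, and one must either track the connecting homomorphisms precisely or invoke the explicit cohomology of $\Sigma_{a}\otimes\Sigma_{b}(t)$ (which, being read off from the characteristic-free fundamental sequences, is insensitive to the hypothesis $p\ge3$). I expect pinning down these two borderline twists to be the only substantial computation in the argument.
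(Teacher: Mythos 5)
Your route is genuinely different from the paper's: the paper disposes of this lemma in one line by combining Lemma \ref{SpinorLemma1} with Langer's computation of the relevant $\mathrm{Ext}$-groups (\cite[Lemma 2.3]{Langer_Quadrics}), whereas you reprove that computation from scratch using the fundamental sequences $0\to\Sigma_{\mp}(-1)\to\mathscr{O}_{Q_{2m}}(-1)^{\oplus 2^{m}}\to\Sigma_{\pm}\to0$ coming from the matrix factorization. Everything you do for $t\le-1$ is correct: the identification of $\mathrm{Ext}^{1}(\Sigma_{-},\Sigma_{\epsilon'}(t))$ with the cokernel of the restriction map $\mu$, the vanishing of the target for $t\le-2$ and for $(t,\epsilon')=(-1,-)$, and the Krull--Schmidt argument that $\mu=0$ when $(t,\epsilon')=(-1,+)$ (a splitting would force $\Sigma_{+}\cong\mathscr{O}_{Q_{2m}}^{\oplus 2^{m-1}}$, contradicting $\mathrm{End}(\Sigma_{+})=K$ since $2^{m-1}\geq2$). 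This is a perfectly serviceable self-contained substitute for the citation.

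The one thing to repair is that the ``borderline twists'' $s\in\{1,2\}$ you single out as the main obstacle are not actually borderline: your own chain of injections already disposes of them, and the only problem is a mis-tracked twist in the final vanishing criterion. After Serre duality the terminal group is $H^{0}(\Sigma_{a'}\otimes\Sigma_{c'}(1-s))^{\vee}$, and by Lemma \ref{SpinorLemma1} one has $\Sigma_{a'}\otimes\Sigma_{c'}(u)\cong\mathscr{H}om(\Sigma_{a'}^{\vee},\Sigma_{c'}(u))\cong\mathscr{H}om(\Sigma_{a''},\Sigma_{c'}(u-1))$; since $\Sigma_{+}$ and $\Sigma_{-}$ are stable of equal rank and equal slope, $\mathrm{Hom}(\Sigma_{a''},\Sigma_{c'}(u-1))=0$ as soon as $u-1<0$, i.e.\ for all $u\le0$ --- not only for $u\le-2$ as you assume. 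Taking $u=1-s$ gives the vanishing for every $s\ge1$, hence for every $t\ge0$, and no separate computation at $t=0,1$ is needed. (Equivalently, run the same d\'evissage in the contravariant variable and land in $\mathrm{Ext}^{2m}(\Sigma_{\epsilon},\Sigma_{?}(t-2m+1))\cong\mathrm{Hom}(\Sigma_{?},\Sigma_{\epsilon}(-t-1))^{\vee}$, which vanishes for $t\ge0$ by the same slope argument.) With that correction your proof is complete.
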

\begin{proof}
Follows from Lemma \ref{SpinorLemma1} and \cite[Lemma 2.3]{Langer_Quadrics}.
\end{proof}
\noindent
Finally, we get the description of the cohomology of twists of Spinor bundles.
\begin{lem}\label{SpinorLemma4}
    Let $\Sigma_{\pm}$ be a spinor bundle. Then
    \begin{enumerate}
        \item $H^{i}(\mathscr{O}_{Q_{2m}},\Sigma_{\pm}(k))=0$ for all $1\leq i\leq n-1$ and $k\in\ZZ$.

        \item $H^{0}(\mathscr{O}_{Q_{2m}},\Sigma_{\pm}(k))=0$ for $k\leq 0$.

        \item $\Sigma_{\pm}(t)$ is globally generated for $t\geq1$.
    \end{enumerate}
\end{lem}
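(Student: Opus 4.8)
The plan is to deduce all three statements from the two-term resolution of the spinor bundle furnished by the matrix factorization, combined with the standard computation of line-bundle cohomology on $\PP^{2m+1}$. By Lemma \ref{SymmetryLemma} the involution $\alpha$ is an automorphism of $Q_{2m}$ interchanging $\Sp$ and $\Sm$ while fixing each $\Oq(t)$; since pushforward along an automorphism preserves cohomology and global generation, it suffices to treat $\Sm$, the case of $\Sp=\alpha_*\Sm(\cdot)$ following formally.

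First I would promote the defining presentation \eqref{Definition_Sigma-} to a short exact sequence on $\PP^{2m+1}$. Because $\varphi_m\psi_m=\psi_m\varphi_m=q_m\cdot\textnormal{I}_{2^m}$, the determinant of $\varphi_m$ is a nonzero polynomial (a power of $q_m$), so $\varphi_m$ is an isomorphism at the generic point and hence injective as a map of locally free sheaves on the integral scheme $\PP^{2m+1}$. Thus
\begin{equation*}
0\to\mathscr{O}_{\PP^{2m+1}}(-2)^{\oplus 2^m}\xrightarrow{\varphi_m}\mathscr{O}_{\PP^{2m+1}}(-1)^{\oplus 2^m}\to i_*\Sm\to0
\end{equation*}
is exact. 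Twisting by $\mathscr{O}_{\PP^{2m+1}}(k)$ and using the projection-formula identification $i_*(\Sm(k))\simeq i_*\Sm\otimes\mathscr{O}_{\PP^{2m+1}}(k)$ together with $H^i(Q_{2m},\Sm(k))=H^i(\PP^{2m+1},i_*\Sm(k))$, everything reduces to line bundles on $\PP^{2m+1}$. For (1) I would feed the twisted sequence into the long exact cohomology sequence: since $H^j(\PP^{2m+1},\mathscr{O}(t))=0$ for $1\le j\le 2m$ and every $t$, in the range $1\le i\le 2m-1=n-1$ both neighbouring terms $H^i(\mathscr{O}(k-1))$ and $H^{i+1}(\mathscr{O}(k-2))$ vanish, forcing $H^i(Q_{2m},\Sm(k))=0$. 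For (2), the same sequence identifies $H^0(Q_{2m},\Sm(k))$ with the cokernel of $H^0(\mathscr{O}(k-2))^{\oplus 2^m}\to H^0(\mathscr{O}(k-1))^{\oplus 2^m}$ (the obstruction $H^1(\mathscr{O}(k-2))$ being zero), and for $k\le 0$ the target $H^0(\mathscr{O}(k-1))$ already vanishes.

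For (3) the key observation is that setting $k=1$ turns the resolution into
\begin{equation*}
0\to\mathscr{O}_{\PP^{2m+1}}(-1)^{\oplus 2^m}\xrightarrow{\varphi_m}\mathscr{O}_{\PP^{2m+1}}^{\oplus 2^m}\to i_*\Sm(1)\to0,
\end{equation*}
exhibiting $\Sm(1)$ as a quotient of the trivial bundle $\Oq^{\oplus 2^m}$ on $Q_{2m}$. The images of the $2^m$ constant global sections then generate $\Sm(1)$, so it is globally generated; since $\Oq(1)$ is globally generated and tensor products of globally generated sheaves are globally generated, $\Sm(t)=\Sm(1)\otimes\Oq(t-1)$ is globally generated for all $t\ge1$.

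Every step is short, so the one point demanding genuine care — and the place I expect to be the main obstacle — is the very first one: confirming that $\varphi_m$ is injective as a sheaf map, so that one really obtains an honest two-term resolution rather than a longer periodic complex, and keeping the twists and the passage between $Q_{2m}$ and $\PP^{2m+1}$ via $i_*$ consistent throughout the cohomology computations.
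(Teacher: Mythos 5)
Your proof is correct, and for parts (1) and (2) it takes a genuinely more self-contained route than the paper. The paper simply cites Langer's Theorem 1.1 together with the exact sequences (2.4)--(2.6) of \cite{Langer_Quadrics} for the cohomology vanishing, whereas you derive (1) and (2) directly from the matrix-factorization presentation, promoted to the two-term locally free resolution
$0\to\mathscr{O}_{\PP^{2m+1}}(-2)^{\oplus 2^{m}}\xrightarrow{\varphi_{m}}\mathscr{O}_{\PP^{2m+1}}(-1)^{\oplus 2^{m}}\to i_{*}\Sm\to0$
on the ambient projective space. Your worry about injectivity of $\varphi_{m}$ is handled correctly: $\det\varphi_{m}\cdot\det\psi_{m}=q_{m}^{2^{m}}$ forces $\det\varphi_{m}\neq0$, and a map of locally free sheaves on an integral scheme that is injective at the generic point is injective; the cokernel is annihilated by $q_{m}$ (since $q_{m}\cdot v=\varphi_{m}(\psi_{m}(v))$), so it really is $i_{*}$ of a sheaf on $Q_{2m}$, and the passage between $Q_{2m}$ and $\PP^{2m+1}$ via the affine morphism $i$ and the projection formula is sound. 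The resulting long exact sequence argument, using that $H^{j}(\PP^{2m+1},\mathscr{O}(t))=0$ for $1\leq j\leq 2m$, gives exactly the range $1\leq i\leq 2m-1=n-1$ of (1), and the identification of $H^{0}(Q_{2m},\Sm(k))$ with the cokernel on global sections gives (2). For (3) your argument coincides with the paper's, which likewise invokes the defining sequences (\ref{Definition_Sigma-})--(\ref{Definition_Sigma+}): twisting by $1$ exhibits $\Sm(1)$ as a quotient of $\Oq^{\oplus 2^{m}}$. The trade-off is that your version is longer but needs nothing from Langer beyond the construction of $\varphi_{m},\psi_{m}$ itself, while the paper outsources the cohomology computation; the reduction from $\Sp$ to $\Sm$ via Lemma \ref{SymmetryLemma} is a nice touch but not strictly necessary, since the identical argument runs with $\psi_{m}$ in place of $\varphi_{m}$.
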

\begin{proof}
(1) and (2) follows from Theorem \cite[Theorem 1.1]{Langer_Quadrics} and short exact sequences (2.4)-(2.6) in \textit{loc. cit}. and (3) follows from exact sequences (\ref{Definition_Sigma-}) and (\ref{Definition_Sigma+}).
\end{proof}
\subsection{ACM vector bundles}
Let $i:X\hookrightarrow \PP^{N}$ be a smooth hypersurface and let $\mathscr{O}_{X}(1)=i^{*}\mathscr{O}_{\PP^{N}}$. Recall that a vector bundle $\mathscr{E}$ over $X$ is called an \textit{Arithmetically Cohen--Macaulay} vector bundle (or an ACM bundle for short) if the following vanishing holds.
\[
H^{i}(X,\mathscr{E}(k))=0\qquad\textnormal{for all}\ k\in\ZZ\ \textnormal{and}\ 1\leq i\leq \dim X-1.
\]
The classification of ACM bundles on quadrics in well known.
\begin{thm}[{\cite[Section 1.2, Theorem]{Achinger2}}]\label{ACMBundlesLemma}
    Let $\eE$ be an ACM vector bundle over $Q_{n}$. Then $\eE$ is isomorphic to a direct sum of line bundles and twisted spinor bundles.
\end{thm}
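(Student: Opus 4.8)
The plan is to translate the cohomological ACM condition into a statement about maximal Cohen--Macaulay (MCM) modules over the homogeneous coordinate ring of $Q_{2m}$, and then to classify those modules via matrix factorizations of $q_m$. Write $S=K[x_{1},\dots,x_{2m+2}]$ and $R=S/(q_m)$, so that $Q_{2m}=\textnormal{Proj}\,R$ and $R$ is a Cohen--Macaulay graded ring of dimension $2m+1$ with irrelevant maximal ideal $\mm$. For a vector bundle $\eE$ put $\Gamma_{*}(\eE)=\bigoplus_{k\in\ZZ}H^{0}(Q_{2m},\eE(k))$. The first step is the standard comparison between local cohomology and sheaf cohomology: the terms $H^{0}_{\mm}$ and $H^{1}_{\mm}$ measure the failure of $\eE\cong\widetilde{\Gamma_{*}(\eE)}$ and vanish for $M=\Gamma_{*}(\eE)$, while $H^{i+1}_{\mm}(M)_{k}\cong H^{i}(Q_{2m},\eE(k))$ for $i\geq1$. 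Since $\dim R=2m+1$, the module $M$ is MCM exactly when $H^{i}_{\mm}(M)=0$ for $i\leq 2m$, i.e.\ when $H^{i}(Q_{2m},\eE(k))=0$ for $1\leq i\leq 2m-1=\dim Q_{2m}-1$ and all $k$. Thus $\eE$ is ACM if and only if $\Gamma_{*}(\eE)$ is a graded MCM $R$-module, and in that case $\eE=\widetilde{\Gamma_{*}(\eE)}$. As both categories are Krull--Schmidt and $\Gamma_{*}(-)$, $\widetilde{(-)}$ are mutually inverse here, it suffices to classify indecomposable graded MCM $R$-modules.

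Because $R$ is a hypersurface ring, the second step invokes Eisenbud's matrix factorization theorem: graded MCM $R$-modules without free summand correspond to reduced matrix factorizations $(\varphi,\psi)$ of $q_m$ over $S$ via $M=\coker(\varphi)$, while free summands $R(k)$ account for the trivial factorization and sheafify to the line bundles $\Oq(k)$. So the problem reduces to classifying matrix factorizations of the nondegenerate quadratic form $q_m$ up to equivalence and direct sum, which is precisely the data Langer's construction of $\Sp,\Sm$ in Section \ref{Section_Bundles_on_Q} produces.

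The heart of the argument is the third step. The additive category of matrix factorizations of $q_m$ modulo trivial ones is equivalent to the category of finitely generated modules over the even Clifford algebra $C_{0}(q_m)$. As $q_m$ has even rank $2m+2$ and $K$ is algebraically closed, $C_{0}(q_m)\cong M_{2^{m}}(K)\times M_{2^{m}}(K)$ is semisimple with exactly two simple modules, each of dimension $2^{m}$. These two simples give the only two indecomposable nonfree MCM modules, both of rank $2^{m}$; comparing with the explicit factorization $(\varphi_{m},\psi_{m})$ of Section \ref{Section_Bundles_on_Q} identifies their sheafifications, up to twist, with the spinor bundles $\Sp$ and $\Sm$. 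Equivalently, one may run Knörrer periodicity to reduce $q_m$ to the node $K[x_{1},x_{2}]/(x_{1}x_{2})=Q_{0}$, whose two nonfree indecomposables $(x_{1}),(x_{2})$ are the evident base-case spinor modules, and transport them back through the periodicity equivalence.

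Collecting this, the indecomposable graded MCM $R$-modules are exactly the twists $R(k)$ and the twisted spinor modules, so, using $\textnormal{Pic}(Q_{2m})=\ZZ\cdot\Oq(1)$, every ACM bundle is a direct sum of line bundles $\Oq(k)$ and twisted spinor bundles $\Sp(k),\Sm(k)$, the decomposition being unique by Krull--Schmidt. The odd-dimensional case is entirely analogous, the only change being that $C_{0}$ is then a single matrix algebra and there is a single spinor bundle. I expect Step 3 to be the main obstacle: establishing that matrix factorizations of $q_m$ are governed by $C_{0}(q_m)$ (equivalently, that the quadric has finite Cohen--Macaulay representation type) and matching the two abstract half-spin simples with the concretely constructed $\Sp,\Sm$, including the bookkeeping of twists. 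The remaining steps are formal once this classification is in place.
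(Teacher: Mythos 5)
The paper offers no proof of this statement --- it is quoted from Achinger, whose source is the classical Kn\"orrer / Buchweitz--Eisenbud--Herzog classification of Cohen--Macaulay modules over quadrics --- and your argument (ACM bundles correspond to graded MCM modules over the homogeneous coordinate ring, Eisenbud's matrix-factorization theorem for hypersurfaces, then the even Clifford algebra or equivalently Kn\"orrer periodicity) is precisely that standard proof, so it matches the intended one. The only inaccuracy is numerical: the two non-free indecomposable MCM modules have rank $2^{m-1}$, not $2^{m}$, over the coordinate ring (the matrix factorization has size $2^{m}$, but on the quadric $\varphi_{m}$ and $\psi_{m}$ each drop to rank $2^{m-1}$), consistent with $\Sigma_{\pm}$ having rank $2^{m-1}$; this slip does not affect the argument.
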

\subsection{Frobenius morphism on quadrics}

Let $\mathsf{F}:Q_{2m}\to Q_{2m}$ be the absolute Frobenius morphism and let $\eE$ be an ACM bundle over $Q_{2m}$. Since the Frobenius morphism is affine, the projection formula yields
\[
H^{i}(Q_{2m},(\mathsf{F}_{*}^{e}\eE)(t))=H^{i}(Q_{2m},\mathsf{F}^{e}_{*}\eE(p^{e}t))=H^{i}(Q_{2m},\eE(p^{e}t))=0\quad(t\in\ZZ,\ 1\leq i\leq 2m-1).
\]
It follows that $\mathsf{F}_{*}^{e}\eE$ is an ACM bundle, so it is a direct sum of line bundles and twisted spinor bundles by Theorem \ref{ACMBundlesLemma}. When $\eE=\mathscr{O}_{Q_{2m}}(t)$ or $\eE=\cS(t)$ the decomposition of $\mathsf{F}_{*}^{e}\eE$ has been described by P.\ Achinger \cite{Achinger2} building on the work of A.\ Langer \cite{Langer_Quadrics}. In this subsection, we briefly present these results, focusing on the aspects that are relevant for the proof of Theorem \ref{Thm1}.
\medskip

First, we remark that $\mathsf{F}$ commutes with arbitrary automorphism. In particular,
\begin{equation}\label{aF=Fa}
\alpha\mathsf{F}=\mathsf{F}\alpha.
\end{equation}
As a consequence, we obtain the following.
\begin{lem}\label{FrobeniusLemma1}
There exist integers $a_{j,t,e},b_{j,t,e},c_{j,t,e},d_{j,t,e}$ such that
\[
\mathsf{F}_{*}^{e}\Oq(j)=\bigoplus_{t}\Oq(t)^{\oplus a_{j,t,e}}\oplus\bigoplus_{t}\cS(t)^{\oplus b_{j,t,e}},
\]
and
\[
\mathsf{F}_{*}^{e}\cS(j)=\bigoplus_{t}\Oq(t)^{\oplus c_{j,t,e}}\oplus\bigoplus_{t}\cS(t)^{\oplus d_{j,t,e}}.
\]
\end{lem}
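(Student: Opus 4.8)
The plan is to observe that a decomposition into line bundles and twisted spinor bundles is essentially already available, and then to extract the stated \emph{symmetry} (equal multiplicities of $\Sp(t)$ and $\Sm(t)$, which is what writing the answer in terms of $\cS(t)$ encodes) from the involution $\alpha$ together with Krull--Schmidt uniqueness.

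First I would record that $\mathsf{F}_{*}^{e}\Oq(j)$ and $\mathsf{F}_{*}^{e}\cS(j)$ are ACM bundles. The line bundle $\Oq(j)$ is ACM by (\ref{CohomologyOfLineBundles}), and $\cS(j)$ is ACM by Lemma \ref{SpinorLemma4}(1); in both cases the ACM property survives $\mathsf{F}_{*}^{e}$ by the projection-formula computation displayed just before the lemma. Hence Theorem \ref{ACMBundlesLemma} yields a decomposition

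\[
\mathsf{F}_{*}^{e}\Oq(j)=\bigoplus_{t}\Oq(t)^{\oplus a_{t}}\oplus\bigoplus_{t}\Sp(t)^{\oplus b^{+}_{t}}\oplus\bigoplus_{t}\Sm(t)^{\oplus b^{-}_{t}},
\]

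and analogously for $\mathsf{F}_{*}^{e}\cS(j)$, with a priori unrelated multiplicities $b^{+}_{t},b^{-}_{t}$ of the two spinor bundles.

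The key step is to apply $\alpha_{*}$ and compare. Iterating (\ref{aF=Fa}) gives $\alpha\circ\mathsf{F}^{e}=\mathsf{F}^{e}\circ\alpha$, hence $\alpha_{*}\circ\mathsf{F}_{*}^{e}=\mathsf{F}_{*}^{e}\circ\alpha_{*}$ by functoriality of pushforward. Since $\alpha_{*}\Oq(j)=\Oq(j)$ and $\alpha_{*}\cS(j)=\alpha_{*}\Sp(j)\oplus\alpha_{*}\Sm(j)=\cS(j)$ by Lemma \ref{SymmetryLemma}, both $\mathsf{F}_{*}^{e}\Oq(j)$ and $\mathsf{F}_{*}^{e}\cS(j)$ are isomorphic to their images under $\alpha_{*}$. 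On the other hand, applying $\alpha_{*}$ to the displayed decomposition and using Lemma \ref{SymmetryLemma} termwise fixes each $\Oq(t)$ and swaps $\Sp(t)\leftrightarrow\Sm(t)$, thereby interchanging the multiplicities $b^{+}_{t}$ and $b^{-}_{t}$.

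Finally I would invoke Krull--Schmidt. Each summand $\Oq(t),\Sp(t),\Sm(t)$ is indecomposable, since line bundles and spinor bundles have endomorphism ring $K$ (Lemma \ref{SpinorLemma2}(1)), which is local; and these summands are pairwise non-isomorphic for distinct $(\textnormal{type},t)$. As the category of vector bundles on the projective variety $Q_{2m}$ is Krull--Schmidt, the two isomorphic decompositions must have matching multiplicities, forcing $b^{+}_{t}=b^{-}_{t}$ for every $t$. Setting $b_{t}:=b^{+}_{t}=b^{-}_{t}$ collapses $\Sp(t)^{\oplus b_{t}}\oplus\Sm(t)^{\oplus b_{t}}=\cS(t)^{\oplus b_{t}}$ and gives the first identity; the identical argument applied to $\mathsf{F}_{*}^{e}\cS(j)$ gives the second. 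I expect the only delicate bookkeeping to be verifying that $\alpha_{*}$ genuinely swaps the two spinor summands and that the summands of distinct twist are non-isomorphic, so that Krull--Schmidt applies cleanly; the rest is formal.
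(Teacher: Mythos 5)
Your proposal is correct and follows essentially the same route as the paper: establish the ACM property via the projection formula, decompose using the classification of ACM bundles on quadrics, and then use $\alpha\mathsf{F}=\mathsf{F}\alpha$ together with Lemma \ref{SymmetryLemma} to force equal multiplicities of $\Sp(t)$ and $\Sm(t)$. The only difference is that you make explicit the Krull--Schmidt uniqueness step that the paper leaves implicit, which is a reasonable addition.
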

\begin{proof}
As explained above, $\mathsf{F}_{*}^{e}\Oq(j)$ and $\mathsf{F}_{*}^{e}\cS(j)$ are ACM bundles, so they decompose as direct sums of line bundles and twisted spinor bundles by Theorem \ref{ACMBundlesLemma}. However, by lemma \ref{SymmetryLemma} and (\ref{aF=Fa}) we have
\[
\mathsf{F}^{e}_{*}\Oq(j)=\mathsf{F}^{e}_{*}\alpha_{*}\Oq(j)=\alpha_{*}\mathsf{F}^{e}_{*}\Oq(j),
\]
and
\[
\mathsf{F}^{e}_{*}\cS(j)=\mathsf{F}^{e}_{*}\alpha_{*}\cS(j)=\alpha_{*}\mathsf{F}^{e}_{*}\cS(j),
\]
while $\alpha_{*}\Sigma_{\pm}=\Sigma_{\mp}$. Therefore, multiplicities of $\Sp(t)$ and $\Sm(t)$ in both decompositions are equal, and the lemma follows.
\end{proof}
\noindent
In \cite{Achinger2}, P.\ Achinger found a way to determine which of the multiplicities $a_{j,t,e},b_{j,t,e},c_{j,t,e},d_{j,t,e}$ are non-zero. We recall his theorem below.
\begin{thm}[{P.\ Achinger, \cite[Theorem 2]{Achinger2}}]\label{FrobeniusTheorem1}
Assume that $p\geq 3$, $m\geq2$, and $e\geq1$.
\begin{enumerate}
    \item $\mathsf{F}_{*}^{e}\Oq(j)$ contains $\Oq(-t)$ as a direct summand if and only if
    \[
    0\leq j+p^{e}t\leq 2m(p^{e}-1).
    \]
    \item $\mathsf{F}_{*}^{e}\Oq(j)$ contains $\cS(-t)$ as a direct summand if and only if
    \[
    (m-1)(p-1)p^{e-1}\leq j+tp^{e}\leq mp^{e}+(m-1)p^{e-1}-2m.
    \]
    \item $\mathsf{F}_{*}^{e}\cS(j)$ contains $\Oq(-t)$ as a direct summand if and only if
    \[
    1\leq j+p^{e}t\leq 2m(p^{e}-1).
    \]
    \item $\mathsf{F}_{*}^{e}\cS(j)$ contains $\cS(-t)$ as a direct summand if and only if
    \[
    (m-1)(p-1)p^{e-1}+1-\delta_{e,1}\leq j+tp^{e}
    \leq mp^{e}+(m-1)p^{e-1}-2m+\delta_{e,1}.
    \]
\end{enumerate}
\end{thm}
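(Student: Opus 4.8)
The plan is to reduce the statement to a determination of multiplicities, to pin down the ranges exactly in the line-bundle cases (1) and (3) by a duality argument, and then to obtain sufficiency and the spinor cases (2), (4) by an induction on $e$ that reduces everything to the base case $e=1$. The starting point is the reduction already available in the excerpt: since $\mathsf{F}^{e}_{*}\mathcal{E}$ is ACM for every ACM bundle $\mathcal{E}$, Theorem \ref{ACMBundlesLemma} and Lemma \ref{FrobeniusLemma1} guarantee that $\mathsf{F}^{e}_{*}\Oq(j)$ and $\mathsf{F}^{e}_{*}\cS(j)$ split as sums of line bundles and symmetrically paired twisted spinor bundles, so it suffices to decide for each twist whether its multiplicity is nonzero. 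Since a summand $\mathcal{L}$ occurs in a bundle $M$ only if both $\textnormal{Hom}(\mathcal{L},M)\neq0$ and $\textnormal{Hom}(M,\mathcal{L})\neq0$, I would read off necessary conditions from these two non-vanishings.

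For the line-bundle summands, parts (1) and (3), this already produces the exact ranges. By adjunction between $(\mathsf{F}^{e})^{*}$ and $\mathsf{F}^{e}_{*}$, together with $(\mathsf{F}^{e})^{*}\Oq(-t)=\Oq(-p^{e}t)$, one gets $\textnormal{Hom}(\Oq(-t),\mathsf{F}^{e}_{*}\Oq(j))=H^{0}(\Oq(j+p^{e}t))$, which is nonzero precisely when $j+p^{e}t\geq0$. For the upper bound I would invoke Grothendieck duality for the finite flat morphism $\mathsf{F}^{e}$: since $\omega_{Q_{2m}}=\Oq(-2m)$, the relative dualizing sheaf is $\Oq(2m(p^{e}-1))$, whence $(\mathsf{F}^{e}_{*}\Oq(j))^{\vee}=\mathsf{F}^{e}_{*}\Oq(2m(p^{e}-1)-j)$, and then $\textnormal{Hom}(\mathsf{F}^{e}_{*}\Oq(j),\Oq(-t))=H^{0}(\Oq(2m(p^{e}-1)-j-p^{e}t))$ gives $j+p^{e}t\leq2m(p^{e}-1)$. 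The identical pair of computations with $\Oq(j)$ replaced by $\cS(j)$, using $\cS^{\vee}=\cS(1)$ from Lemma \ref{SpinorLemma1} and the vanishing $H^{0}(\Sigma_{\pm}(k))=0$ for $k\leq0$ from Lemma \ref{SpinorLemma4}, yields the range $1\leq j+p^{e}t\leq2m(p^{e}-1)$ of part (3); the shift of the lower endpoint from $0$ to $1$ is precisely the difference between $H^{0}(\Oq)\neq0$ and $H^{0}(\Sigma_{\pm})=0$.

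What remains is sufficiency in (1), (3) and the full determination of the spinor summands in (2), (4); here the naive Hom-computation is obstructed, because detecting $\cS(-t)$ forces one to understand the Frobenius pullback $(\mathsf{F}^{e})^{*}\cS$, which is not itself a spinor bundle. I would circumvent this by induction on $e$ via $\mathsf{F}^{e+1}_{*}=\mathsf{F}_{*}\circ\mathsf{F}^{e}_{*}$: applying $\mathsf{F}_{*}$ to the decomposition of $\mathsf{F}^{e}_{*}\Oq(j)$ (resp. $\mathsf{F}^{e}_{*}\cS(j)$) and invoking the four transition rules of the base case $e=1$ on each summand yields the decomposition at level $e+1$. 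Since summands only accumulate and never cancel, a given twist occurs at level $e+1$ if and only if it is reachable from a twist occurring at level $e$, so the proof is completed by checking that the union of the resulting intervals coincides with the stated closed-form bounds. In this bookkeeping the correction terms $\pm\delta_{e,1}$ record exactly how the base case $e=1$ deviates from the stable pattern valid for $e\geq2$.

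The main obstacle is the base case $e=1$, and in particular the spinor content of $\mathsf{F}_{*}\Oq(j)$ and of $\mathsf{F}_{*}\cS(j)$: this is where the duality/adjunction formalism no longer suffices and one must use a direct model of the spinor bundles — Langer's matrix-factorization presentation (\ref{Definition_Sigma-})–(\ref{Definition_Sigma+}) or the representation-theoretic description — to compute the relevant $\textnormal{Hom}$ and $\textnormal{Ext}$ groups and thereby establish the $e=1$ transition rules. A secondary, purely combinatorial difficulty is verifying that the intervals glue without off-by-one errors as $e$ grows, which is precisely what forces the appearance of the $\delta_{e,1}$ corrections.
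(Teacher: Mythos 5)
First, a point of comparison: the paper does not prove this statement at all. Theorem \ref{FrobeniusTheorem1} is quoted from Achinger \cite[Theorem 2]{Achinger2} and used as a black box, so there is no internal proof to measure your argument against. Judged on its own terms, your outline is structurally reasonable and in fact mirrors the architecture of the proof in the literature: the reduction via Theorem \ref{ACMBundlesLemma} and Lemma \ref{FrobeniusLemma1} to deciding which twists occur, the adjunction computation $\textnormal{Hom}(\Oq(-t),\mathsf{F}^{e}_{*}\Oq(j))=H^{0}(\Oq(j+p^{e}t))$ together with the duality identity $(\mathsf{F}^{e}_{*}\Oq(j))^{\vee}=\mathsf{F}^{e}_{*}\Oq(2m(p^{e}-1)-j)$ (using $\omega_{Q_{2m}}=\Oq(-2m)$) does correctly reproduce the stated ranges in (1) and (3) as necessary conditions, and $\mathsf{F}^{e+1}_{*}=\mathsf{F}_{*}\circ\mathsf{F}^{e}_{*}$ combined with Krull--Schmidt is the right inductive mechanism.

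As a proof, however, there is a genuine gap, and you have named it without filling it: everything reduces to the base case $e=1$, i.e.\ to the complete decomposition of $\mathsf{F}_{*}\Oq(j)$ and $\mathsf{F}_{*}\Sigma_{\pm}(j)$ for \emph{all} $j$, and in particular to their spinor content, and no argument for this is supplied. The Hom/duality formalism cannot produce it, since detecting a summand $\cS(-t)$ requires controlling $\mathsf{F}^{*}\cS$, which is not itself a sum of line and spinor bundles; what is actually required is the explicit computation with the graded matrix factorizations (\ref{Definition_Sigma-})--(\ref{Definition_Sigma+}), which occupies the technical core of \cite{Langer_Quadrics} and \cite{Achinger2} and is where the boundary phenomena responsible for the $\delta_{e,1}$ corrections originate. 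Without the base case, sufficiency in (1) and (3) is also unproven: nonvanishing of $\textnormal{Hom}(\Oq(-t),M)$ and $\textnormal{Hom}(M,\Oq(-t))$ does not force $\Oq(-t)$ to split off, because both maps may factor through other summands of $M$ without composing to an isomorphism. Finally, the interval-gluing step of the induction is asserted rather than checked; it is elementary bookkeeping, but it is exactly the place where the asymmetric endpoints $(m-1)(p-1)p^{e-1}+1-\delta_{e,1}$ and $mp^{e}+(m-1)p^{e-1}-2m+\delta_{e,1}$ in (4) must be derived, so it cannot be waved through.
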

\noindent
For the rest of this subsection, we prove refinements of the direct sum decompositions from Lemma \ref{FrobeniusLemma1} based on Theorem \ref{FrobeniusTheorem1} above.
\begin{lem}\label{FrobeniusLemma2}
Keep the notation from Lemma \textnormal{\ref{FrobeniusLemma1}} and assume moreover that $p\geq3$, and $p^{e-1}\geq\frac{2m}{m-1}$. Then $b_{0,-m,e}\neq 0$ and $b_{0,-m+1,e}\neq0$.
\end{lem}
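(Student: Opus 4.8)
The plan is to read off both nonvanishing statements directly from Theorem \ref{FrobeniusTheorem1}(2), which characterizes exactly when $\cS(-t)$ occurs as a direct summand of $\mathsf{F}_{*}^{e}\Oq(j)$. Comparing with the notation of Lemma \ref{FrobeniusLemma1}, where $b_{j,s,e}$ denotes the multiplicity of $\cS(s)$, the claim $b_{0,-m,e}\neq 0$ is exactly the assertion that $\cS(-m)$ is a summand of $\mathsf{F}_{*}^{e}\Oq$, and $b_{0,-m+1,e}\neq 0$ that $\cS(-m+1)$ is. So the first step is to set $j=0$ and substitute the appropriate value of $t$, namely $t=m$ respectively $t=m-1$, into the double inequality of Theorem \ref{FrobeniusTheorem1}(2), and then to verify that it is satisfied under the hypothesis $p^{e-1}\geq\frac{2m}{m-1}$.

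For the summand $\cS(-m)$ we take $t=m$, so the middle term $j+tp^{e}$ becomes $mp^{e}$, and the condition reads $(m-1)(p-1)p^{e-1}\leq mp^{e}\leq mp^{e}+(m-1)p^{e-1}-2m$. The right-hand inequality simplifies to $(m-1)p^{e-1}\geq 2m$, that is, precisely $p^{e-1}\geq\frac{2m}{m-1}$; this is exactly where the hypothesis is used. The left-hand inequality, after dividing by $p^{e-1}>0$, becomes $(m-1)(p-1)\leq mp$, equivalently $m+p\geq 1$, which holds unconditionally. Hence $\cS(-m)$ occurs and $b_{0,-m,e}\neq 0$.

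For the summand $\cS(-m+1)$ we take $t=m-1$, so the middle term is $(m-1)p^{e}$. After dividing by $(m-1)p^{e-1}>0$ (using $m\geq 2$), the left-hand inequality reduces to $p-1\leq p$, which is immediate. The right-hand inequality becomes $0\leq p^{e}+(m-1)p^{e-1}-2m$, and since already $(m-1)p^{e-1}\geq 2m$ by hypothesis, this holds a fortiori. Thus $\cS(-m+1)$ occurs and $b_{0,-m+1,e}\neq 0$.

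The argument is a direct computation, so I do not expect any serious obstacle beyond careful bookkeeping. The single point that requires attention is the sign convention relating the index $s$ in $b_{j,s,e}$, the multiplicity of $\cS(s)$, to the index $-t$ appearing in Theorem \ref{FrobeniusTheorem1}(2); tracking this correctly is what pins down $t=m$ and $t=m-1$ as the relevant substitutions. It is worth remarking that the hypothesis $p^{e-1}\geq\frac{2m}{m-1}$ is sharp for this statement: it is precisely the condition under which $\cS(-m)$ appears, whereas $\cS(-m+1)$ is produced under a strictly weaker bound.
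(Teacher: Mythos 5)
Your proposal is correct and follows essentially the same route as the paper: both read off the claim from Theorem \ref{FrobeniusTheorem1}(2) with $j=0$ and verify the double inequality for $t=m$ and $t=m-1$, with the hypothesis $p^{e-1}\geq\frac{2m}{m-1}$ entering exactly where you say it does (the right-hand inequality for $t=m$). Your sharpness remark is a small accurate addition the paper does not make, but the argument is otherwise identical.
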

\begin{proof}
It follows from Theorem \ref{FrobeniusTheorem1} that for $p\geq3$ the bundle $\mathsf{F}^{e}_{*}\Oq$ contains $\cS(-t)$ as a direct summand if and only if
\[
(m-1)p^{e}-(m-1)p^{e-1} \leq  tp^{e}\leq mp^{e}+(m-1)p^{e-1}-2m.
\]
It is straightforward that if $p^{e-1}\geq\frac{2m}{m-1}$ then both $t=m$ and $t=m-1$ satisfy the above inequalities.
\end{proof}
\begin{lem}[P.\ Achinger, A.\ Langer]\label{FrobeniusLemma3}
Assume that $p\geq3$.
\begin{enumerate}
    \item $\mathsf{F}_{*}\cS(-m+1)=\bigoplus_{t}\Oq(t)^{\oplus c_{-m+1,t,1}}\oplus\cS(-m+1)^{\oplus \gamma}$ with $\gamma\neq0$,
    \item $\mathsf{F}_{*}\cS(-m)=\bigoplus_{t}\Oq(t)^{\oplus c_{-m,t,1}}\oplus\cS(-m)^{\oplus \delta}$ with $\delta\neq0$.
\end{enumerate}
\end{lem}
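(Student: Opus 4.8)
The plan is to read off both decompositions directly from Achinger's Theorem \ref{FrobeniusTheorem1}(4) specialized to $e=1$. By Lemma \ref{FrobeniusLemma1} we already know that $\mathsf{F}_{*}\cS(j)$ is a direct sum of line bundles $\Oq(t)$ and twisted spinor bundles $\cS(t)$; the line-bundle part is irrelevant here and is absorbed into the term $\bigoplus_{t}\Oq(t)^{\oplus c_{j,t,1}}$. Thus the entire content of the lemma is to identify which spinor twists $\cS(t)$ occur and to verify that the claimed twist occurs with nonzero multiplicity. Both tasks amount to counting the integer solutions of a single double inequality.

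First I would set $e=1$ in Theorem \ref{FrobeniusTheorem1}(4), so that $\delta_{e,1}=1$. A short simplification rewrites the condition for $\cS(-t)$ to be a summand of $\mathsf{F}_{*}\cS(j)$ as
\[
(m-1)(p-1)\leq j+tp\leq m(p-1).
\]
I would then substitute the two relevant values of $j$ and solve for the integer $t$ in each case. For $j=-m+1$ the inequalities reduce to $m-1\leq t\leq m-\tfrac{1}{p}$, and for $j=-m$ they reduce to $m-1+\tfrac{1}{p}\leq t\leq m$.

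The crucial observation is that since $p\geq3$ we have $0<\tfrac{1}{p}<1$, so in each case the interval contains exactly one integer: $t=m-1$ when $j=-m+1$, and $t=m$ when $j=-m$. Consequently the only spinor twist appearing in $\mathsf{F}_{*}\cS(-m+1)$ is $\cS(-(m-1))=\cS(-m+1)$, and the only one in $\mathsf{F}_{*}\cS(-m)$ is $\cS(-m)$. Moreover, because each interval is nonempty, Theorem \ref{FrobeniusTheorem1}(4) guarantees that these summands actually occur, so the corresponding multiplicities $\gamma$ and $\delta$ are nonzero.

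I do not anticipate a genuine obstacle: the entire argument is a bookkeeping exercise on the interval above, and the heavy lifting has already been done in the classification underlying Theorem \ref{FrobeniusTheorem1}. The only point requiring care is the simplification of the double inequality and the verification that $p\geq3$ forces the pinned-down integer to be unique; I would double-check the arithmetic of the bounds, since an off-by-one there would spuriously introduce an extra spinor twist and break the clean form of the decomposition.
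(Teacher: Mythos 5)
Your proposal is correct and follows essentially the same route as the paper: both specialize Theorem \ref{FrobeniusTheorem1}(4) to $e=1$ (where the Kronecker deltas cancel to give $(m-1)(p-1)\leq j+tp\leq m(p-1)$) and then check that for $j=-m$ and $j=-m+1$ the resulting interval for $t$ has length $1-\tfrac{1}{p}<1$ and pins down exactly one integer, namely $t=m$ and $t=m-1$ respectively. Your arithmetic is right, and the nonemptiness of each interval gives $\gamma\neq0$ and $\delta\neq0$ exactly as in the paper.
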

\begin{proof}
It follows from Theorem \ref{FrobeniusTheorem1} that $\mathsf{F}_{*}\cS(j)$ contains $\cS(t)$ as a direct summand if and only if
\[
(m-1)(p-1)\leq -j(p-1)+p(j-t)\leq m(p-1).
\]
Substituting in the above inequalities $j=-m$ and dividing by $(p-1)$ we obtain
\[
m-1\leq m-\frac{p}{p-1}(m+t)\leq m.
\]
Since $t$ must be an integer, the above inequalities are satisfied if and only if $t=-m$. The reasoning for $j=-m+1$ is analogous.
\end{proof}
\noindent
We need to slightly refine the content of the above lemma. For the future reference let us introduce auxiliary notation. For $a,b\in\left\{+,-\right\}$ we denote
\begin{align}\label{MultiplicityNotation}
u_{a}^{b}\stackrel{\textnormal{def.}}{=}&\textnormal{ multiplicity of }\Sigma_{b}(-m)\textnormal{ as a direct summand of }\mathsf{F}_{*}\Sigma_{a}(-m),\\
v_{a}^{b}\stackrel{\textnormal{def.}}{=}&\textnormal{ multiplicity of }\Sigma_{b}(-m+1)\textnormal{ as a direct summand of }\mathsf{F}_{*}\Sigma_{a}(-m+1).
\end{align}

\begin{lem}\label{FrobeniusLemma4}
Assume $p\geq3$. Then the matrices
\[
\begin{pmatrix}
u_{+}^{+}&u_{-}^{+}\\
u_{+}^{-}& u_{-}^{-}
\end{pmatrix}
\qquad
\begin{pmatrix}
v_{+}^{+}&v_{-}^{+}\\
v_{+}^{-}&v_{-}^{-}
\end{pmatrix}
\]
are symmetric and non-zero.
\end{lem}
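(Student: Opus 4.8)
The plan is to deduce both assertions directly from the $\alpha$-symmetry of the Frobenius pushforward together with the non-vanishing recorded in Lemma \ref{FrobeniusLemma3}. Before starting, I would observe that the multiplicities $u_{a}^{b}$ and $v_{a}^{b}$ are well defined: the spinor bundles $\Sp$ and $\Sm$ are indecomposable and pairwise non-isomorphic by Lemma \ref{SpinorLemma2}, so by the Krull--Schmidt theorem (valid for coherent sheaves on $Q_{2m}$) the indecomposable summands of $\mathsf{F}_{*}\Sp(-m)$ and $\mathsf{F}_{*}\Sm(-m)$ occur with uniquely determined multiplicities, and likewise for the $(-m+1)$-twists.

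For symmetry, I would exploit the autoequivalence $\alpha_{*}$. Since $\alpha$ is an automorphism, $\alpha_{*}$ is exact and preserves direct sum decompositions, hence multiplicities of indecomposable summands. By (\ref{aF=Fa}) we have $\alpha_{*}\mathsf{F}_{*}=\mathsf{F}_{*}\alpha_{*}$, and by Lemma \ref{SymmetryLemma} we have $\alpha_{*}\Sp(t)=\Sm(t)$ and $\alpha_{*}\Sm(t)=\Sp(t)$. Therefore the multiplicity of $\Sp(-m)$ in $\mathsf{F}_{*}\Sm(-m)$, namely $u_{-}^{+}$, equals the multiplicity of $\alpha_{*}\Sp(-m)=\Sm(-m)$ in $\alpha_{*}\mathsf{F}_{*}\Sm(-m)=\mathsf{F}_{*}\alpha_{*}\Sm(-m)=\mathsf{F}_{*}\Sp(-m)$, namely $u_{+}^{-}$. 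Thus $u_{-}^{+}=u_{+}^{-}$, which is exactly the off-diagonal symmetry of the first matrix; the identical argument applied to the $(-m+1)$-twists gives $v_{-}^{+}=v_{+}^{-}$.

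For non-vanishing, I would compare the additive decomposition $\mathsf{F}_{*}\cS(-m)=\mathsf{F}_{*}\Sp(-m)\oplus\mathsf{F}_{*}\Sm(-m)$ with the decomposition from Lemma \ref{FrobeniusLemma3}(2), whose spinor part is $\cS(-m)^{\oplus\delta}$ with $\delta\neq0$. By Lemma \ref{FrobeniusLemma1} the summands $\Sp(-m)$ and $\Sm(-m)$ occur with equal multiplicity $\delta$ in $\mathsf{F}_{*}\cS(-m)$. On the other hand, reading the multiplicity of $\Sp(-m)$ off the decomposition $\mathsf{F}_{*}\cS(-m)=\mathsf{F}_{*}\Sp(-m)\oplus\mathsf{F}_{*}\Sm(-m)$ gives $u_{+}^{+}+u_{-}^{+}$. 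Hence $u_{+}^{+}+u_{-}^{+}=\delta\neq0$, so not all entries of the first matrix vanish. The same computation based on Lemma \ref{FrobeniusLemma3}(1) yields $v_{+}^{+}+v_{-}^{+}=\gamma\neq0$, so the second matrix is non-zero as well.

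This argument is essentially bookkeeping once the right structural inputs are in place, so I do not anticipate a serious obstacle; the only point requiring care is to keep track of which twist of which spinor bundle is being counted, and in particular to use Lemma \ref{FrobeniusLemma1} to pass between the ``$\cS$-multiplicity'' $\delta$ (respectively $\gamma$) appearing in Lemma \ref{FrobeniusLemma3} and the individual multiplicities $u_{+}^{+},u_{-}^{+}$ of $\Sp(-m)$ (respectively $v_{+}^{+},v_{-}^{+}$ of $\Sp(-m+1)$).
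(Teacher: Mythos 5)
Your proof is correct and follows essentially the same route as the paper: the paper also derives the symmetry from Lemma \ref{SymmetryLemma} together with $\alpha\mathsf{F}=\mathsf{F}\alpha$ (as in the proof of Lemma \ref{FrobeniusLemma1}) and the non-vanishing from Lemma \ref{FrobeniusLemma3}. You have simply written out the bookkeeping that the paper leaves implicit.
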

\begin{proof}
It follows from Lemma \ref{FrobeniusLemma3} that these matrices are non-zero. The symmetry follows from Lemma \ref{SymmetryLemma} and (\ref{aF=Fa}) like in the proof of Lemma \ref{FrobeniusLemma1}.
\end{proof}
\noindent
Finally, recall Langer's result that will be useful later on.
\begin{lem}[{A.\ Langer, \cite[Corollary 4.3]{Langer_Quadrics}}]\label{FrobeniusLemma5}
Assume that $p\geq3$. Then $\mathsf{F}_{*}\Oq(-m)$ is a direct sum of line bundles.
\end{lem}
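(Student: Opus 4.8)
The plan is to reprove this directly from Achinger's decomposition result, Theorem \ref{FrobeniusTheorem1}, rather than retracing Langer's original argument. By Lemma \ref{FrobeniusLemma1} applied with $e=1$ and $j=-m$, the bundle $\mathsf{F}_{*}\mathscr{O}_{Q_{2m}}(-m)$ is an ACM bundle and hence decomposes as
\[
\mathsf{F}_{*}\mathscr{O}_{Q_{2m}}(-m)=\bigoplus_{t}\mathscr{O}_{Q_{2m}}(t)^{\oplus a_{-m,t,1}}\oplus\bigoplus_{t}\cS(t)^{\oplus b_{-m,t,1}}.
\]
Thus it suffices to show that the second summand is trivial, i.e.\ that $b_{-m,t,1}=0$ for every $t\in\ZZ$; equivalently, that no twisted spinor bundle $\cS(-t)$ occurs as a direct summand.

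First I would specialize the criterion of Theorem \ref{FrobeniusTheorem1}(2) to the case $e=1$, $j=-m$. Since $p^{e-1}=1$ and $p^{e}=p$, the condition for $\cS(-t)$ to appear as a summand of $\mathsf{F}_{*}\mathscr{O}_{Q_{2m}}(-m)$ becomes
\[
(m-1)(p-1)\leq -m+tp\leq mp+(m-1)-2m=mp-m-1.
\]
Next I would analyze the two inequalities separately, exploiting that $t$ must be an integer. Expanding $(m-1)(p-1)=mp-m-p+1$, the left inequality rearranges to $(m-1)p+1\leq tp$, that is $t\geq m-1+\tfrac{1}{p}$; since $\tfrac{1}{p}>0$ and $t$ is an integer this forces $t\geq m$. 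The right inequality rearranges to $tp\leq mp-1$, that is $t\leq m-\tfrac{1}{p}$, which for integer $t$ forces $t\leq m-1$. The two constraints $t\geq m$ and $t\leq m-1$ cannot hold simultaneously, so no integer $t$ satisfies the criterion, and consequently $b_{-m,t,1}=0$ for all $t$. This gives the claim.

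The argument is entirely a matter of bookkeeping the bounds produced by Theorem \ref{FrobeniusTheorem1}, so there is no genuine conceptual obstacle; the only point requiring care is the integrality step, where the strict gap of $\tfrac{1}{p}$ between the true rational endpoints and the nearest integers is exactly what makes the admissible range empty. I note that the hypothesis $p\geq3$ is inherited from Theorem \ref{FrobeniusTheorem1}, since the integrality manipulation itself would go through for any $p\geq2$.
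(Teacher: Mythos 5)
Your proposal is correct and follows essentially the same route as the paper: both specialize Theorem \ref{FrobeniusTheorem1}(2) to $e=1$, $j=-m$, reduce to the inequalities $(m-1)p+1\leq tp\leq mp-1$, and observe that these force $(m-1)<t<m$, which no integer satisfies. Your bookkeeping and the integrality step match the paper's argument exactly.
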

\begin{proof}
It follows from Theorem \ref{FrobeniusTheorem1} that $\cS(-t)$ is a direct summand of $\mathsf{F}_{*}\Oq(-m)$ if and only if
\[
(m-1)p+1\leq tp\leq mp-1.
\]
The above equation forces that
\[
(m-1)< t< m,
\]
and therefore is never satisfied by an integer $t$.
\end{proof}
\section{Even dimensional quadrics are not $D$-affine}\label{Section_Proof}

In this section, we prove the following result that implies Theorem \ref{Thm1}.
\begin{thm}\label{Thm2}
Assume that $\textnormal{char }K=p\geq 3$ and $m\geq 2$. Then $H^{1}(Q_{2m},\mathscr{D}_{Q_{2m}})\neq 0$.
\end{thm}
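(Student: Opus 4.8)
The strategy is to produce a single short exact sequence as in \eqref{H1_criterion_ses_1} that stays non-split after \emph{every} Frobenius push-forward, so that condition (2) of Proposition \ref{Prop1} fails and hence $H^{1}(Q_{2m},\mathscr{D}_{Q_{2m}})\neq0$. First I would choose $t_{0}$ large enough that Lemma \ref{FrobeniusLemma2} applies; then $\mathsf{F}^{t_{0}}_{*}\Oq$ contains both $\Sm(-m)$ and $\Sp(-m+1)$ as direct summands. Let $\zeta\in\textnormal{Ext}^{1}_{\Oq}(\Sp(-m+1),\Sm(-m))=\textnormal{Ext}^{1}_{\Oq}(\Sp,\Sm(-1))\cong K$ be a generator (Lemma \ref{SpinorLemma3}), and set $\eta=\pi^{*}\iota_{*}\zeta$, where $\iota\colon\Sm(-m)\hookrightarrow\mathsf{F}^{t_{0}}_{*}\Oq$ is the inclusion of the summand into the sub-copy and $\pi\colon\mathsf{F}^{t_{0}}_{*}\Oq\twoheadrightarrow\Sp(-m+1)$ is the projection from the quotient-copy. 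Because $\iota$ and $\pi$ are split and $\mathsf{F}^{e}_{*}$ is exact and additive, $\mathsf{F}^{e}_{*}\iota$ and $\mathsf{F}^{e}_{*}\pi$ remain split; composing $\mathsf{F}^{e}_{*}\eta$ with the push-forward of a retraction of $\iota$ and the pull-back of a section of $\pi$ returns $\mathsf{F}^{e}_{*}\zeta$. Thus $\mathsf{F}^{e}_{*}\eta\neq0$ if and only if $\mathsf{F}^{e}_{*}\zeta\neq0$, and the whole problem reduces to showing $\mathsf{F}^{e}_{*}\zeta\neq0$ for all $e\geq0$ (the case $e=0$ being trivial).

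The decisive feature of $\zeta$ is that it has a \emph{line-bundle middle term}. Indeed, twisting one of Langer's spinor sequences \cite{Langer_Quadrics} by $-m$ realizes $\zeta$ as the class of
\[
0\to\Sm(-m)\to\Oq(-m)^{\oplus2^{m}}\to\Sp(-m+1)\to0 .
\]
Its $\alpha$-image, a generator of $\textnormal{Ext}^{1}_{\Oq}(\Sm(-m+1),\Sp(-m))$, is represented by $\alpha_{*}$ of this sequence and again has middle term $\Oq(-m)^{\oplus2^{m}}$ (here I use Lemma \ref{SymmetryLemma} together with $\alpha_{*}\Oq(-m)=\Oq(-m)$). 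Since each group $\textnormal{Ext}^{1}_{\Oq}(\Sigma_{b}(-m+1),\Sigma_{d}(-m))$ is $0$ or $1$-dimensional by Lemma \ref{SpinorLemma3}, every nonzero class in it is a unit multiple of such a generator and so also admits a representative whose middle term is a sum of copies of $\Oq(-m)$.

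The core is then an induction on $e$ establishing: for every nonzero class $\zeta'$ in one of the two one-dimensional groups $\textnormal{Ext}^{1}_{\Oq}(\Sigma_{b}(-m+1),\Sigma_{d}(-m))$ with $\{b,d\}=\{+,-\}$, the push-forward $\mathsf{F}^{e}_{*}\zeta'$ is nonzero and, after composing with a split inclusion of, and a split projection onto, suitable spinor summands, restricts to a nonzero class of the same type. For the base case $e=1$ I would apply $\mathsf{F}_{*}$ to the line-bundle-middle representative of $\zeta'$: the middle becomes $\mathsf{F}_{*}\Oq(-m)^{\oplus2^{m}}$, a sum of line bundles by Lemma \ref{FrobeniusLemma5}, while the sub and quotient contain spinor summands because the multiplicity matrices of Lemma \ref{FrobeniusLemma4} are nonzero and ($\alpha$-)symmetric; Krull--Schmidt then forbids splitting, so $\mathsf{F}_{*}\zeta'\neq0$. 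All mixed $\textnormal{Ext}^{1}$ groups (line-to-line, line-to-spinor, spinor-to-line) vanish by \eqref{CohomologyOfLineBundles} and Lemma \ref{SpinorLemma4}, and by Lemma \ref{FrobeniusLemma3} the only spinor twists occurring in $\mathsf{F}_{*}\Sigma_{d}(-m)$ and $\mathsf{F}_{*}\Sigma_{b}(-m+1)$ are $-m$ and $-m+1$; hence a nonzero component of $\mathsf{F}_{*}\zeta'$ lands in some one-dimensional group $\textnormal{Ext}^{1}_{\Oq}(\Sigma_{b'}(-m+1),\Sigma_{d'}(-m))$, which provides the required restriction. The inductive step writes $\mathsf{F}^{e+1}_{*}=\mathsf{F}_{*}\circ\mathsf{F}^{e}_{*}$, feeds the class extracted at level $e$ into the base case, and uses that split maps stay split under $\mathsf{F}_{*}$ and compose.

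The \emph{main obstacle} is exactly the passage from $e=1$ to all $e$: Lemma \ref{FrobeniusLemma5} tells us that $\mathsf{F}_{*}\Oq(-m)$ is a sum of line bundles, but this holds for a single twist only --- for $e\geq2$ the bundle $\mathsf{F}^{e}_{*}\Oq(-m)$ acquires spinor summands (Theorem \ref{FrobeniusTheorem1}), so a direct Krull--Schmidt comparison of $\mathsf{F}^{e}_{*}\eta$ is unavailable. The mechanism that circumvents this is the projection trick above: at each stage one discards the line-bundle parts (legitimate precisely because the mixed $\textnormal{Ext}^{1}$'s vanish) and returns to a one-dimensional spinor extension group whose generator again has a line-bundle middle term, so that the one-step argument can be reapplied. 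Verifying that this reduction never kills the relevant class --- which rests on the non-vanishing in Lemma \ref{FrobeniusLemma4} together with the rigidity of twists in Lemma \ref{FrobeniusLemma3} --- is the technical heart of the argument.
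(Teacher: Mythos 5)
Your proposal is correct, and while it follows the same overall skeleton as the paper (reduce via Proposition \ref{Prop1} to exhibiting an extension of $\mathsf{F}^{t_0}_{*}\Oq$ by itself that survives all further push-forwards, and locate the obstruction in the spinor summands $\cS(-m)$, $\cS(-m+1)$ supplied by Lemma \ref{FrobeniusLemma2}), the engine that makes non-splitting persist is genuinely different. The paper works with the full blocks $0\to\cS(-m)^{\oplus\alpha}\to\eE\to\cS(-m+1)^{\oplus\beta}\to0$: it first classifies the possible middle terms via the ACM property (Lemma \ref{KeyLemma1}: $\eE$ is a sum of twisted spinor bundles plus $\Oq(-m)^{\oplus\rho}$, and the sequence splits iff $\rho=0$), and then forces $\rho=0$ after one $\mathsf{F}_{*}$ by a multiplicity count using the symmetric matrices of Lemma \ref{FrobeniusLemma4}. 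You instead never leave the one-dimensional groups $\textnormal{Ext}^{1}(\Sigma_{b}(-m+1),\Sigma_{d}(-m))$, $b\neq d$: the generator is represented by the twisted matrix-factorization sequence with middle term $\Oq(-m)^{\oplus 2^{m}}$, after one $\mathsf{F}_{*}$ the middle is a sum of line bundles (Lemma \ref{FrobeniusLemma5}) while the ends contain spinor summands, so Krull--Schmidt forbids splitting; projecting onto a single spinor-to-spinor component (legitimate because the mixed $\textnormal{Ext}^{1}$'s vanish and Lemma \ref{FrobeniusLemma3} pins the twists to $-m$ and $-m+1$) lands you back in a one-dimensional group and the step iterates. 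This buys a proof that dispenses with Lemma \ref{KeyLemma1} and with the linear algebra in Lemma \ref{KeyLemma2}, at the price of tracking split injections and projections through the induction. Two small points to make explicit: (i) to know that each \emph{individual} $\mathsf{F}_{*}\Sigma_{d}(-m)$ and $\mathsf{F}_{*}\Sigma_{b}(-m+1)$, not merely $\mathsf{F}_{*}\cS(-m)$ and $\mathsf{F}_{*}\cS(-m+1)$, contains a spinor summand, you should combine Lemma \ref{FrobeniusLemma3} with the $\alpha$-symmetry, which shows that each column of the matrices in Lemma \ref{FrobeniusLemma4} sums to $\delta>0$ (resp.\ $\gamma>0$); (ii) the sub and quotient of the matrix-factorization sequence necessarily carry opposite spinor labels, since $\textnormal{Ext}^{1}(\Sigma_{\pm},\Sigma_{\pm}(-1))=0$ by Lemma \ref{SpinorLemma3}(2), which is exactly what places its class in one of the one-dimensional groups you need.
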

\noindent
The proof will be deduced from a series of lemmas below.
\begin{lem}\label{KeyLemma1}
Let 
\begin{equation}\label{Non-split-Sigma}
    0\to \cS^{\oplus \alpha}\to \eE\to\cS(1)^{\oplus\beta}\to0
\end{equation}
be a short exact sequence. Then
\begin{enumerate}
    \item For some $\alpha_{+},\alpha_{-},\beta_{+},\beta_{-},\rho\geq 0$
    \[
    \eE\simeq \Sigma_{+}^{\oplus\alpha_{+}}\oplus\Sigma_{-}^{\oplus\alpha_{-}}\oplus 
    \Sigma_{+}(1)^{\oplus\beta_{+}}\oplus\Sigma_{-}(1)^{\oplus\beta_{-}}\oplus\mathscr{O}_{Q_{2m}}^{\oplus \rho}. 
    \]
    \item \textnormal{(\ref{Non-split-Sigma})} splits if and only if $\rho=0$.
\end{enumerate}
\end{lem}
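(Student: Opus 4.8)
The plan is to read off the isomorphism type of $\eE$ directly from the class of the extension. Write the class of (\ref{Non-split-Sigma}) as
\[
\xi\in\textnormal{Ext}^{1}_{\Oq}\bigl(\cS(1)^{\oplus\beta},\cS^{\oplus\alpha}\bigr).
\]
The starting observation is that the isomorphism class of the middle term of an extension depends only on the orbit of $\xi$ under $\textnormal{Aut}(\cS^{\oplus\alpha})\times\textnormal{Aut}(\cS(1)^{\oplus\beta})$ acting by pushout and pullback (since pushing out or pulling back along an automorphism produces an isomorphic middle term). Next I would expand $\xi$ into blocks using $\cS=\Sigma_{+}\oplus\Sigma_{-}$. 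After the twist $\textnormal{Ext}^{1}(\Sigma_{a}(1),\Sigma_{b})=\textnormal{Ext}^{1}(\Sigma_{a},\Sigma_{b}(-1))$, Lemma \ref{SpinorLemma3} shows that the two diagonal blocks $\textnormal{Ext}^{1}(\Sigma_{\pm}(1),\Sigma_{\pm})$ vanish, while each off-diagonal block is a full matrix space over $K$. Thus $\xi$ is recorded by a pair of matrices: $A\in\textnormal{Mat}_{\alpha\times\beta}(K)$ in the block $\textnormal{Ext}^{1}(\Sigma_{+}(1)^{\oplus\beta},\Sigma_{-}^{\oplus\alpha})$ and $B\in\textnormal{Mat}_{\alpha\times\beta}(K)$ in the block $\textnormal{Ext}^{1}(\Sigma_{-}(1)^{\oplus\beta},\Sigma_{+}^{\oplus\alpha})$.

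Because $\textnormal{Hom}(\Sigma_{+},\Sigma_{-})=\textnormal{Hom}(\Sigma_{-},\Sigma_{+})=0$ by Lemma \ref{SpinorLemma2}, the automorphism groups split as $\textnormal{Aut}(\cS^{\oplus\alpha})=\textnormal{GL}_{\alpha}(K)\times\textnormal{GL}_{\alpha}(K)$ and $\textnormal{Aut}(\cS(1)^{\oplus\beta})=\textnormal{GL}_{\beta}(K)\times\textnormal{GL}_{\beta}(K)$, and the four factors act on $A$ and $B$ by \emph{independent} left- and right-multiplication. I would therefore bring $A$ and $B$ separately to rank normal form $\textnormal{diag}(\textnormal{I}_{a},0)$ and $\textnormal{diag}(\textnormal{I}_{b},0)$, where $a=\textnormal{rk}\,A$ and $b=\textnormal{rk}\,B$. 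In this normal form the extension visibly splits off $\Sigma_{-}^{\oplus(\alpha-a)}$, $\Sigma_{+}^{\oplus(\alpha-b)}$, $\Sigma_{+}(1)^{\oplus(\beta-a)}$, $\Sigma_{-}(1)^{\oplus(\beta-b)}$ as trivial summands, leaving $a$ copies of the (unique up to isomorphism) non-split extension of $\Sigma_{+}(1)$ by $\Sigma_{-}$ and $b$ copies of the non-split extension of $\Sigma_{-}(1)$ by $\Sigma_{+}$.

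It remains to identify these elementary middle terms. Since $\textnormal{Ext}^{1}(\Sigma_{+}(1),\Sigma_{-})=K$ is one-dimensional, any two non-split extensions differ by a nonzero scalar and hence have isomorphic middle terms; and the matrix factorization $(\varphi_{m},\psi_{m})$ produces, via its $2$-periodicity on $Q_{2m}$, the fundamental sequences
\[
0\to\Sigma_{-}\to\Oq^{\oplus 2^{m}}\to\Sigma_{+}(1)\to0,\qquad 0\to\Sigma_{+}\to\Oq^{\oplus 2^{m}}\to\Sigma_{-}(1)\to0.
\]
These are non-split (for $m\geq2$ the middle term is a direct sum of line bundles, whereas $\Sigma_{-}\oplus\Sigma_{+}(1)$ has indecomposable summands of rank $2^{m-1}\geq2$, so Krull--Schmidt forbids an isomorphism). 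Hence each elementary middle term is $\Oq^{\oplus 2^{m}}$, which yields the decomposition in part (1) with $\alpha_{+}=\alpha-b$, $\alpha_{-}=\alpha-a$, $\beta_{+}=\beta-a$, $\beta_{-}=\beta-b$, and $\rho=2^{m}(a+b)$. Part (2) is then immediate: (\ref{Non-split-Sigma}) splits iff $\xi=0$ iff $A=B=0$ iff $a=b=0$ iff $\rho=0$.

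The main obstacle is the bookkeeping in the middle step: making precise that the middle term depends only on the $\textnormal{Aut}\times\textnormal{Aut}$-orbit of $\xi$, that the four general linear factors really act independently so that $A$ and $B$ may be reduced separately, and then pinning down the fundamental sequences from the matrix factorization with the correct spinor labels and twists. Everything else is a direct-sum computation.
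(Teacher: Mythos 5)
Your proof is correct, but it takes a genuinely different route from the paper's. The paper proves part (1) by observing that $\eE$ is an extension of ACM bundles and hence ACM, invoking the classification of ACM bundles on quadrics (Theorem \ref{ACMBundlesLemma}) to write $\eE$ as a sum of line bundles and twisted spinor bundles, and then killing the unwanted twists using $H^{0}(Q_{2m},\eE(t))=0$ for $t<0$ together with the dual sequence; part (2) is proved by a separate argument showing that when $\rho=0$ the composite $\Sp(1)^{\oplus\beta_{+}}\oplus\Sm(1)^{\oplus\beta_{-}}\to\eE\to\cS(1)^{\oplus\beta}$ is an isomorphism on global sections, hence surjective by global generation, hence an isomorphism by rank count. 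You instead work entirely on the level of the Yoneda class: the block decomposition of $\textnormal{Ext}^{1}(\cS(1)^{\oplus\beta},\cS^{\oplus\alpha})$ forced by Lemma \ref{SpinorLemma3}, the independent $\textnormal{GL}\times\textnormal{GL}$ actions on the two off-diagonal blocks (legitimate because $\textnormal{Hom}(\Sigma_{\pm},\Sigma_{\mp})=0$ by Lemma \ref{SpinorLemma2}), and the identification of the elementary non-split middle term with $\Oq^{\oplus 2^{m}}$ via the matrix-factorization sequences, which are Langer's (2.4)--(2.6) already cited in the proof of Lemma \ref{SpinorLemma4}; your Krull--Schmidt argument for their non-splitness is fine since $\textnormal{rk}\,\Sigma_{\pm}=2^{m-1}\geq2$. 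Your approach buys more: it computes $\rho=2^{m}(\textnormal{rk}\,A+\textnormal{rk}\,B)$ explicitly and derives (1) and (2) simultaneously without appealing to the ACM classification at this step; the cost is the bookkeeping you flag yourself (orbit-invariance of the middle term, independence of the four $\textnormal{GL}$ factors, the correct spinor labels in the fundamental sequences --- though for that last point only the vanishing of the diagonal blocks $\textnormal{Ext}^{1}(\Sigma_{\pm}(1),\Sigma_{\pm})=0$ is needed to force the kernel to be the opposite spinor bundle). The paper's route is shorter given that the ACM machinery is already set up and is the argument that gets reused in Lemma \ref{KeyLemma2}.
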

\begin{proof}
First, we prove (1). We do it in the case when $m$ is even. The proof for odd $m$ is analogous with the only difference coming from the fact that one needs to use Lemma \ref{SpinorLemma1}(2) instead of Lemma \ref{SpinorLemma1}(1). Note that $\eE$ is an extension of ACM bundles, so it is an ACM bundle. By Theorem \ref{ACMBundlesLemma} $\eE$ is a direct sum of line bundles and twisted spinor bundles. Moreover, it follows from Lemma \ref{SpinorLemma4} that
\begin{equation}\label{VanishingForE(t)}
H^{0}(Q_{2m},\eE(t))=0 \qquad \textnormal{for }t<0,
\end{equation}
and therefore
\[
    \eE\simeq\Sigma_{+}^{\oplus\alpha_{+}}\oplus\Sigma_{-}^{\oplus\alpha_{-}}\oplus 
    \Sigma_{+}(1)^{\oplus\beta_{+}}\oplus\Sigma_{-}(1)^{\oplus\beta_{-}}\oplus\mathscr{O}_{Q_{2m}}^{\oplus \rho}\oplus\eE_{1},
\]
where
\[
\eE_{1}=
\bigoplus_{j<0}\left(
\mathscr{O}_{Q_{2m}}(j)^{\oplus \rho_{j}}\oplus\Sigma_{+}(-j)^{\oplus r_{j}}\oplus\Sigma_{-}(j)^{\oplus s_{j}}
\right).
\]
Now, it follows from Lemma \ref{SpinorLemma1} that $\eE^{\vee}$ fits into a short exact sequence
\[
    0\to \cS^{\oplus\beta}\to\eE^{\vee}\to  \cS(1)^{\oplus\alpha}\to0
\]
obtained by dualizing (\ref{Non-split-Sigma}). In particular, by the reasoning above, 
\[
\eE_{1}^{\vee}(-1)=\bigoplus_{j>0}\left(
\mathscr{O}_{Q_{2m}}(j-1)^{\oplus \rho_{j}}\oplus\Sigma_{+}(j)^{\oplus r_{j}}\oplus\Sigma_{-}(j)^{\oplus s_{j}}
\right)
\]
has no global sections. It follows that $\eE_{1}=0$ and we are done.
\medskip

\noindent
Now, we show (2). Clearly if (\ref{Non-split-Sigma}) splits then $\rho=0$, so we assume that $\rho=0$ and we work to show that (\ref{Non-split-Sigma}) splits. Consider the composition
\[
f:\Sp(1)^{\oplus\beta_{+}}\oplus\Sm(1)^{\oplus\beta_{-}}\to\eE\to\cS(1)^{\oplus\beta}
\]
and observe that since
\[
H^{0}(Q_{2m},\Sigma_{\pm})=H^{1}(Q_{2m},\Sigma_{\pm})=0
\]
by Lemma \ref{SpinorLemma4}, $f$ induces an isomorphism on the global sections. It follows that 
$f$ is surjective, because $\cS(1)$ is generated by global sections by Lemma \ref{SpinorLemma4}(3). On the other hand, by comparing dimensions of global sections we see that
\[
\textnormal{rk}
\left(
\Sp(1)^{\oplus\beta_{+}}\oplus\Sm(1)^{\oplus\beta_{-}}
\right)
=
\textnormal{rk}
\left(
\cS(1)^{\oplus\beta}
\right).
\]
Since a surjective morphism of vector bundles of equal rank is an isomorphism, we conclude that $f$ is an isomorphism. This clearly implies that $\eE\to\cS(1)^{\oplus\beta}$ splits, and we are done.
\end{proof}
\begin{lem}\label{KeyLemma2}
Assume that $p\geq3$ and consider an exact sequence
\begin{equation}\label{Non-split-Sigma2}
0\to \cS(-m)^{\oplus \alpha}\to\eE\to \cS(-m+1)^{\oplus \beta}\to0.
\end{equation}
If \textnormal{(\ref{Non-split-Sigma2})} does not split then 
\begin{equation}\label{F-Non-split-Sigma2}
0\to \mathsf{F}_{*}\left(
\cS(-m)
\right)^{\oplus \alpha}
\to
\mathsf{F}_{*}\eE\to
\mathsf{F}_{*}
\left(
\cS(-m+1)
\right)^{\oplus \beta}
\to0
\end{equation}
is isomorphic to a direct sum of some non-split extension
\begin{equation}\label{Non-split-Sigma3}
0\to \cS(-m)^{\oplus \alpha'}\to\eE_{m}'\to \cS(-m+1)^{\oplus \beta'}\to0
\end{equation}
and a trivial extension.
\end{lem}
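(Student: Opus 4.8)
The plan is to apply the exact functor $\mathsf{F}_{*}$ (exact since $\mathsf{F}$ is finite and flat) to (\ref{Non-split-Sigma2}), producing the short exact sequence (\ref{F-Non-split-Sigma2}), and then to isolate its ``spinor part'' using the decompositions of $\mathsf{F}_{*}\cS(-m)$ and $\mathsf{F}_{*}\cS(-m+1)$ from Lemma \ref{FrobeniusLemma3}. Writing
\[
\mathsf{F}_{*}\cS(-m)=L_{0}\oplus\cS(-m)^{\oplus\delta},\qquad \mathsf{F}_{*}\cS(-m+1)=L_{1}\oplus\cS(-m+1)^{\oplus\gamma}
\]
with $L_{0},L_{1}$ sums of line bundles and $\delta,\gamma\neq0$, and setting $\alpha'=\delta\alpha$, $\beta'=\gamma\beta$, the two outer terms of (\ref{F-Non-split-Sigma2}) decompose as a line-bundle part plus $\cS(-m)^{\oplus\alpha'}$, respectively a line-bundle part plus $\cS(-m+1)^{\oplus\beta'}$.

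First I would show that the class $\mathsf{F}_{*}\eta$ of (\ref{F-Non-split-Sigma2}) lives entirely in the spinor-to-spinor block. Decomposing $\textnormal{Ext}^{1}$ of the two outer terms along the above direct sums produces four blocks, and all three involving a line bundle vanish: the line-to-line block is governed by $H^{1}(Q_{2m},\Oq(t))$, which vanishes by (\ref{CohomologyOfLineBundles}) as $2m-1\geq1$; the line-to-spinor and spinor-to-line blocks are governed by $H^{1}(Q_{2m},\cS(t))$ (using $\cS^{\vee}=\cS(1)$ from Lemma \ref{SpinorLemma1}(3)), which vanishes by Lemma \ref{SpinorLemma4}(1). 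Hence, by the direct-sum-of-extensions formalism of Section \ref{Section_Extensions}, (\ref{F-Non-split-Sigma2}) is isomorphic to the trivial extension on the line-bundle summands direct-summed with an extension
\[
0\to\cS(-m)^{\oplus\alpha'}\to\eE_{m}'\to\cS(-m+1)^{\oplus\beta'}\to0
\]
whose class $\eta'$ is precisely $\mathsf{F}_{*}\eta$ under this identification.

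It remains to prove $\eta'=\mathsf{F}_{*}\eta\neq0$, which is the heart of the matter: $\mathsf{F}_{*}$ genuinely kills $\textnormal{Ext}^{1}$-classes in general (this is exactly the mechanism exploited in Proposition \ref{Prop1}), so the argument must use the special geometry of spinor bundles. By Lemma \ref{SpinorLemma3}(2) the two diagonal blocks of $\textnormal{Ext}^{1}(\cS(-m+1),\cS(-m))$ vanish, so $\eta$ is assembled from the two atomic groups $\textnormal{Ext}^{1}(\Sigma_{\pm}(-m+1),\Sigma_{\mp}(-m))=K$ of Lemma \ref{SpinorLemma3}(3); as $\mathsf{F}_{*}$ is additive and respects this block structure, it suffices to prove $\mathsf{F}_{*}$ is injective on each atomic $K$. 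Each such $K$ is generated by the standard spinor sequence
\[
0\to\Sigma_{\mp}(-m)\to\Oq(-m)^{\oplus 2^{m}}\to\Sigma_{\pm}(-m+1)\to0
\]
coming from the matrix factorization (\ref{Definition_Sigma-})--(\ref{Definition_Sigma+}), which is non-split because its middle term is a sum of line bundles. Applying $\mathsf{F}_{*}$ keeps the middle a sum of line bundles by Lemma \ref{FrobeniusLemma5}, while by Lemmas \ref{FrobeniusLemma3} and \ref{FrobeniusLemma4} both outer terms acquire genuine spinor summands; the spinor part of the resulting sequence therefore cannot split, for otherwise its line-bundle middle term would contain a spinor summand, contradicting Krull--Schmidt. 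Thus $\mathsf{F}_{*}$ is nonzero on each atomic generator, and since it is semilinear over the Frobenius of $K$ (an automorphism, as $K$ is perfect) it is injective on each one-dimensional atom; as distinct atomic blocks land in distinct summands and cannot cancel, $\eta\neq0$ forces $\mathsf{F}_{*}\eta=\eta'\neq0$.

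The step I expect to be the main obstacle is this last one, establishing $\mathsf{F}_{*}\eta\neq0$: the naive hope that $\mathsf{F}_{*}$ preserves non-splitness is simply false, so one is forced to pin non-vanishing to the explicit line-bundle-middle spinor sequences and to track the spinor summands of $\mathsf{F}_{*}\Sigma_{\pm}$ through the symmetric multiplicity matrices of Lemma \ref{FrobeniusLemma4}.
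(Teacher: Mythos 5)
Your proof is correct, and while its first half coincides with the paper's, the way you establish the crucial non-splitness of (\ref{Non-split-Sigma3}) is genuinely different. Both arguments begin by observing that $\textnormal{Ext}^{1}$ between the outer terms of (\ref{F-Non-split-Sigma2}) collapses onto the spinor-to-spinor block, so that (\ref{F-Non-split-Sigma2}) is a trivial extension plus a spinor extension whose class is $\mathsf{F}_{*}\eta$. From there the paper argues by contraposition and never touches explicit generators: it invokes Lemma \ref{KeyLemma1} to write $\eE$ as a sum of twisted spinor bundles plus $\Oq(-m)^{\oplus\rho}$, assumes (\ref{F-Non-split-Sigma2}) splits, counts multiplicities of $\Sigma_{\pm}(-m)$ and $\Sigma_{\pm}(-m+1)$ in $\mathsf{F}_{*}\eE$ using the symmetric matrices of Lemma \ref{FrobeniusLemma4}, and forces $\rho=0$ by a rank comparison. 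You instead prove directly that $\mathsf{F}_{*}$ is injective on each one-dimensional atom $\textnormal{Ext}^{1}(\Sigma_{\pm}(-m+1),\Sigma_{\mp}(-m))=K$ by pushing forward its explicit generator, the twisted matrix-factorization sequence $0\to\Sigma_{\mp}(-m)\to\Oq(-m)^{\oplus 2^{m}}\to\Sigma_{\pm}(-m+1)\to0$, whose image cannot split because the middle term stays a sum of line bundles (Lemma \ref{FrobeniusLemma5}) while the outer terms acquire spinor summands (Lemmas \ref{FrobeniusLemma3}--\ref{FrobeniusLemma4}); Krull--Schmidt then does the rest. This is a clean linearization of the ``key idea'' the paper only states informally in the introduction; it buys independence from Lemma \ref{KeyLemma1} and makes the special role of the twist $-m$ transparent, at the price of having to justify the semilinear structure of $\mathsf{F}_{*}$ on Yoneda $\textnormal{Ext}$. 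Two points to tighten: (i) $\mathsf{F}_{*}$ is semilinear over the \emph{inverse} of the Frobenius of $K$, since pushout along $\lambda\cdot\textnormal{id}$ becomes pushout along $\lambda^{1/p}\cdot\textnormal{id}$ on the pushforward --- harmless because $K$ is perfect, but it should be stated in the right direction; (ii) the middle term of the generating sequence really is $\Oq(-m)^{\oplus 2^{m}}$ and not $\Oq(-m+1)^{\oplus 2^{m}}$ with the normalization (\ref{Definition_Sigma-})--(\ref{Definition_Sigma+}), and this must be checked explicitly, since $\mathsf{F}_{*}\Oq(-m+1)$ does contain $\cS(-m+1)$ as a direct summand by Theorem \ref{FrobeniusTheorem1}(2), so the Krull--Schmidt contradiction would evaporate at the neighbouring twist.
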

\begin{proof}
By Lemma \ref{FrobeniusLemma3} we have 

\[
\mathsf{F}_{*}\cS(-m+1)=\bigoplus_{t}\Oq(t)^{\oplus c_{-m+1,t,1}}\oplus\cS(-m+1)^{\oplus \gamma}
\]
with $\gamma\neq0$, and
\[
\mathsf{F}_{*}\cS(-m)=\bigoplus_{t}\Oq(-j)^{\oplus d_{-m,t,1}}\oplus\cS(-m)^{\oplus \delta}
\]
with $\delta\neq0$. It follows from Lemma \ref{SpinorLemma1}(3) Lemma \ref{SpinorLemma3}(1), and Lemma \ref{SpinorLemma4}(1) that for any integer $t$, and for $-m\leq j\leq -m+1$ we have
\[
\textnormal{Ext}^{1}_{\Oq}\left(\Oq(t),\cS(j)\right)=\textnormal{Ext}^{1}_{\Oq}\left(\cS(j),\Oq(t)\right)=\textnormal{Ext}^{1}_{\Oq}\left(\cS(-m),\cS(-m+1)\right)=0.
\]
As a consequence,
\[
\textnormal{Ext}^{1}_{\Oq}(\mathsf{F}_{*}\cS(-m+1)^{\oplus\alpha},\mathsf{F}_{*}\cS(-m)^{\oplus\beta})=\textnormal{Ext}^{1}_{\Oq}(\cS(-m+1),\cS(m))^{\oplus \alpha\beta\gamma\delta},
\]
so (\ref{F-Non-split-Sigma2}) is isomorphic to a direct sum of some extension (\ref{Non-split-Sigma3}) and a trivial extension. It follows that if (\ref{Non-split-Sigma3}) splits then so does (\ref{F-Non-split-Sigma2}). Therefore, to prove the lemma we only need to show that if (\ref{F-Non-split-Sigma2}) splits then so does (\ref{Non-split-Sigma2}).
By Lemma \ref{KeyLemma1}
\[
\eE\simeq \Sigma_{+}(-m)^{\oplus\alpha_{+}}\oplus\Sigma_{-}(-m)^{\oplus\alpha_{-}}\oplus 
    \simeq \Sigma_{+}(-m+1)^{\oplus\beta_{+}}\oplus\Sigma_{-}(-m+1)^{\oplus\beta_{-}}\oplus\mathscr{O}_{Q_{2m}}(-m)^{\oplus \rho},
\]
and (\ref{Non-split-Sigma2}) splits if and only if $\rho=0$, so we assume that (\ref{F-Non-split-Sigma2}) splits and we work to show that $\rho=0$. This is achieved by counting multiplicities of $\Sigma_{\pm}(-m)$ and $\Sigma_{\pm}(-m+1)$ as direct summands of $\mathsf{F}_{*}\eE$. We have to show that
\begin{equation}\label{RankComparison}
\alpha_{+}+\alpha_{-}=2\alpha,\qquad \beta_{+}+\beta_{-}=2\beta,
\end{equation}
because then $\rho=0$ by comparing ranks of $\eE$ and
\[
\Sigma_{+}(-m)^{\oplus\alpha_{+}}\oplus\Sigma_{-}(-m)^{\oplus\alpha_{-}}\oplus\Sigma_{+}(-m+1)^{\oplus\beta_{+}}\oplus\Sigma_{-}(-m+1)^{\oplus\beta_{-}}.
\]
We will only show the first of the equalities (\ref{RankComparison}) as the other one is proven analogously. At this point, we recall the notation (\ref{MultiplicityNotation}) and we observe that by Lemma \ref{FrobeniusLemma3} and Lemma \ref{FrobeniusLemma5}, none of the factors
\[
\mathsf{F}_{*}\Sigma_{\pm}(-m+1),\ \mathsf{F}_{*}\cS(-m+1),\ \mathsf{F}_{*}\Oq(-m)
\]
contains $\Sigma_{\pm}(-m)$ as a direct summand. It follows that if (\ref{F-Non-split-Sigma2}) splits then
\begin{align}
\alpha(u_{+}^{+}+u_{-}^{+})=&\ \alpha_{+}u_{+}^{+}+\alpha_{-}u_{-}^{+}\\
\alpha(u_{+}^{-}+u_{-}^{-})=&\ \alpha_{+}u_{+}^{-}+\alpha_{-}u_{-}^{-},
\end{align}
which can be rewritten as
\[
\begin{pmatrix}
u_{+}^{+}&u_{-}^{+}\\
u_{+}^{-}&u_{-}^{-}
\end{pmatrix}
\begin{pmatrix}
\alpha-\alpha_{+}\\
\alpha-\alpha_{-}
\end{pmatrix}
=0.
\]
By Lemma \ref{FrobeniusLemma4}, the matrix on the left hand side is symmetric and non-zero, so either it is invertible, or all of its entries are equal and non-zero. In any case
\[
2\alpha-\alpha_{+}-\alpha_{-}=0,
\]
and we are done.
\end{proof}
\noindent
Finally, we prove the main result of the paper.
%
%
%
%
%
\begin{proof}[Proof of Theorem \textnormal{\ref{Thm2}}]
By Proposition \ref{Prop1} we only have to show that for some $e>0$ there exists an extension
\begin{equation}\label{Non-split-after-F}
0\to\mathsf{F}_{*}^{e}\Oq\to\eE\to\mathsf{F}_{*}^{e}\Oq\to0
\end{equation}
such that
\begin{equation}\label{Non-split-after-F+t}
0\to\mathsf{F}_{*}^{e+t}\Oq\to\mathsf{F}_{*}^{t}\eE\to\mathsf{F}_{*}^{e+t}\Oq\to0
\end{equation}
does not split for all $t\geq1$. As in Lemma \ref{FrobeniusLemma1} write
\[
\mathsf{F}_{*}^{e}\Oq\simeq
\bigoplus_{t}\Oq(t)^{\oplus a_{_{0,t,e}}}
\oplus
\bigoplus_{t}\cS(t)^{\oplus b_{0,t,e}}.
\]
Then, as in the proof of Lemma \ref{KeyLemma2} we have for some integers $\mu_{k}$
\[
\textnormal{Ext}^{1}_{\Oq}(\mathsf{F}_{*}^{e}\Oq,\mathsf{F}_{*}^{e}\Oq)
=
\bigoplus_{k<0}\textnormal{Ext}^{1}_{\Oq}
\left(
\cS(k+1),\cS(k)
\right)
^{\oplus \mu_{k}}
\]
and $\mu_{m}\neq 0$ for $e\gg0$ by Lemma \ref{FrobeniusLemma2}. It follows that every (\ref{Non-split-after-F}) is isomorphic to a direct sum of extensions
\[
0\to\cS(k)^{\oplus \alpha_{k}}\to\eE_{k}\to \cS(k+1)^{\oplus \beta_{k}}\to0
\]
and some trivial extension. In particular, we can take (\ref{Non-split-after-F}) to be isomorphic to a direct sum of some non-trivial extension
\[
0\to \cS(-m)^{\oplus \alpha}\to\eE_{m}\to \cS(-m+1)^{\oplus \beta}\to0
\]
(which exists by Lemma \ref{SpinorLemma3}(3)) and a split extension. Then for $t=1$ (\ref{Non-split-after-F+t}) is a direct sum of some non-trivial extension
\[
0\to \cS(-m)^{\oplus \alpha'}\to\eE_{m}'\to \cS(-m+1)^{\oplus \beta'}\to0
\]
and a trivial extension by Lemma \ref{KeyLemma2}. This allows to verify the claim for all $t\geq1$ by induction.
\end{proof}

\section{More counter-examples to $D$-affinity of flag varieties}\label{Section_More_Counterexamples}

In this section, we list some consequences of Theorem \ref{Thm1}, extending the discussion in the introduction.
\medskip

First, we recall that a surjective morphisms $f:X\to Y$ of smooth projective varieties is called a \textit{fibration} if $f_{*}\Ox=\mathscr{O}_{Y}$, and we note a result of A.\ Langer.
\begin{thm}[{A.\ Langer, \cite[Theorem 0.2(4)]{Langer_D-affine}}]\label{LangerFibrationTheorem}
Let $f:X\to Y$ be a fibration between smooth projective variates. If $X$ is $D$-affine then so is $Y$.
\end{thm}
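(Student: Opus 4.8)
The plan is to transfer $D$-affinity downward by pulling $\mathscr{D}_{Y}$-modules back to $X$, exploiting that $X$ is $D$-affine there, and then descending the conclusions along $f$ by means of the defining property $f_{*}\Ox=\mathscr{O}_{Y}$. For a $\mathscr{D}_{Y}$-module $M$ one forms its inverse-image $\mathscr{D}_{X}$-module $f^{*}M=\Ox\otimes_{f^{-1}\mathscr{O}_{Y}}f^{-1}M$ (in the derived sense $Lf^{*}M$ if $f$ fails to be flat), whose underlying $\Ox$-module is the ordinary pullback. I would verify the two defining properties of $D$-affinity for $Y$ separately: that $Y$ is $D$-quasi-affine, and that $H^{i}(Y,M)=0$ for $i>0$ and every $\mathscr{D}_{Y}$-module $M$. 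By \cite[Lemma 1.5]{Langer_D-affine} the latter may even be reduced, once quasi-affinity is known, to the single sheaf $\mathscr{D}_{Y}$, but it is cleaner to argue with an arbitrary $M$.

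For $D$-quasi-affinity, observe that the adjunction $\Gamma(X,f^{*}M)=\Gamma(Y,f_{*}f^{*}M)$ together with the projection formula and $f_{*}\Ox=\mathscr{O}_{Y}$ identifies $\Gamma(X,f^{*}M)$ with $\Gamma(Y,M)$, so global sections are unchanged by pullback. Since $X$ is $D$-affine, hence $D$-quasi-affine, the $\mathscr{D}_{X}$-module $f^{*}M$ is generated by $\Gamma(X,f^{*}M)$. Let $N\subseteq M$ be the $\mathscr{D}_{Y}$-submodule generated by $\Gamma(Y,M)$; applying the right-exact functor $f^{*}$ to $0\to N\to M\to M/N\to 0$ and using the identification above, the image of $f^{*}N$ in $f^{*}M$ contains a set of $\mathscr{D}_{X}$-generators, so $f^{*}(M/N)=0$. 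As $f$ is surjective, $f^{*}$ carries nonzero quasi-coherent sheaves to nonzero sheaves (by a stalkwise Nakayama argument), whence $M/N=0$ and $M=N$ is globally generated.

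For the cohomological vanishing I would run the Leray spectral sequence $H^{p}(Y,R^{q}f_{*}f^{*}M)\Rightarrow H^{p+q}(X,f^{*}M)$, whose abutment vanishes in positive total degree because $X$ is $D$-affine. The edge map already yields an injection $H^{1}(Y,f_{*}f^{*}M)\hookrightarrow H^{1}(X,f^{*}M)=0$, so that $H^{1}(Y,M)=0$ holds with no further hypothesis; this is precisely the piece needed for Corollary \ref{Cor1}, where failure of $D$-affinity of $Q_{2m}$ is detected by $H^{1}$ of the sheaf of differential operators. For the higher groups $H^{i}(Y,M)$ with $i\geq 2$ one needs the stronger comparison $Rf_{*}f^{*}M\simeq M$, that is $R^{q}f_{*}f^{*}M=0$ for $q>0$, which by the derived projection formula reduces to $Rf_{*}\Ox=\mathscr{O}_{Y}$.

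The main obstacle is exactly this last point: controlling the higher direct images. The bare equality $f_{*}\Ox=\mathscr{O}_{Y}$ does not by itself force $R^{q}f_{*}\Ox=0$ for $q>0$, nor that $f$ be flat, and without these the unit $M\to f_{*}f^{*}M$ need not be an isomorphism, so the clean transfer of sections and of higher cohomology can break down. In the cases relevant to this paper, namely fibrations $G/P_{1}\to G/P_{2}$ of flag varieties, the fibers are themselves flag varieties, which are rational with vanishing higher cohomology of the structure sheaf, so $Rf_{*}\Ox=\mathscr{O}_{Y}$ holds and the argument closes; in general one must either impose this cohomological triviality or argue more delicately, and it is there that the substance of \cite[Theorem 0.2(4)]{Langer_D-affine} resides.
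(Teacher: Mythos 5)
The paper offers no internal proof of this statement --- it is imported verbatim from Langer with a citation --- so the only question is whether your argument stands on its own. It identifies the right strategy (pull a $\mathscr{D}_{Y}$-module back to $X$, use $D$-affinity there, descend along $f$), but two steps are genuinely unsupported. First, already at the level of $H^{0}$ you invoke the projection formula $f_{*}f^{*}M\simeq f_{*}\mathscr{O}_{X}\otimes_{\mathscr{O}_{Y}}M\simeq M$ for an \emph{arbitrary} quasi-coherent $M$. This is valid when $M$ is locally free of finite rank or when $f$ is flat, but a fibration of smooth projective varieties need not be flat (a blow-up along a smooth center is a fibration), and $f_{*}f^{*}(-)$ is not right exact while $f_{*}\mathscr{O}_{X}\otimes_{\mathscr{O}_{Y}}(-)$ is, so the two functors cannot agree on all quasi-coherent sheaves. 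Consequently neither the identification $\Gamma(X,f^{*}M)=\Gamma(Y,M)$ on which your quasi-affinity argument rests, nor the claim that $H^{1}(Y,M)=0$ ``holds with no further hypothesis,'' is actually established. (For the single sheaf $M=\mathscr{D}_{Y}$ one can salvage the $H^{1}$ statement by writing $\mathscr{D}_{Y}$ as the union of the locally free subsheaves $\mathscr{D}_{Y}^{(e)}$ of the $p$-filtration, applying the projection formula to each, and passing to the colimit; but that is not the argument you gave, and it does not cover arbitrary $M$.)

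Second, and more seriously, the vanishing $R^{q}f_{*}\mathscr{O}_{X}=0$ for $q>0$, which you need for every $H^{i}(Y,M)$ with $i\geq2$ and also to make the unit $M\to f_{*}f^{*}M$ an isomorphism in the derived formulation, does not follow from $f_{*}\mathscr{O}_{X}=\mathscr{O}_{Y}$, and you do not derive it from the $D$-affinity of $X$; your closing paragraph concedes that this is ``where the substance of the theorem resides.'' That concession is accurate: obtaining this vanishing (or circumventing it) from the hypothesis that $X$ is $D$-affine is precisely the nontrivial content of Langer's result, and restricting to the case of flag-variety projections with rational fibers proves a weaker statement than the one asserted. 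A smaller repair is also needed in the faithfulness step: the ``stalkwise Nakayama'' argument requires finitely generated stalks, and a quasi-coherent module can satisfy $N_{y}=\mathfrak{m}_{y}N_{y}$ at every point without vanishing, so for a general quasi-coherent $\mathscr{D}_{Y}$-module $M/N$ one must first reduce to coherent $\mathscr{O}_{Y}$-submodules. As it stands the proposal is a correct reduction of the theorem to an unproved key lemma, not a proof.
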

\noindent
We note that Corollary \ref{Cor1} is an immediate consequence of the above and Theorem \ref{Thm1}. The next corollary about $D$-affinity of flag varieties also follows trivially from Langer's result.
\begin{cor}\label{LangerFibrationCorollary}
Let $G$ be a semi-simple, simply connected algebraic group, and let $P_{1}\subset P_{2}\subset G$ be two parabolic subgroups. If $G/P_{1}$ is $D$-affine then so is $G/P_{2}$.     
\end{cor}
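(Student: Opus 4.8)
The plan is to realize the natural projection $\pi\colon G/P_{1}\to G/P_{2}$, $gP_{1}\mapsto gP_{2}$, as a fibration in the sense of Theorem \ref{LangerFibrationTheorem}, and then apply that theorem directly. Since $P_{1}\subset P_{2}$, the map $\pi$ is well defined, surjective and $G$-equivariant. Both the source and the target are flag varieties, hence smooth projective varieties, so the only substantive point is to verify the condition $\pi_{*}\mathscr{O}_{G/P_{1}}=\mathscr{O}_{G/P_{2}}$ that appears in the definition of a fibration.

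First I would identify the fibers of $\pi$. The quotient map $G\to G/P_{2}$ is a Zariski-locally trivial principal $P_{2}$-bundle, and one has $G/P_{1}=G\times^{P_{2}}(P_{2}/P_{1})$, so $\pi$ is the associated bundle with fiber $P_{2}/P_{1}$. In particular $\pi$ is a Zariski-locally trivial fibration all of whose fibers are isomorphic to the flag variety $P_{2}/P_{1}$. Being a flag variety, $P_{2}/P_{1}$ is a connected reduced projective variety, so $H^{0}(P_{2}/P_{1},\mathscr{O})=K$.

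Next I would deduce $\pi_{*}\mathscr{O}_{G/P_{1}}=\mathscr{O}_{G/P_{2}}$ from local triviality: over an open $U\subset G/P_{2}$ trivializing the bundle one has $\pi^{-1}(U)\cong U\times(P_{2}/P_{1})$, and the projection formula gives the canonical identification $(\pi_{*}\mathscr{O}_{G/P_{1}})|_{U}=\mathscr{O}_{U}\otimes_{K}H^{0}(P_{2}/P_{1},\mathscr{O})=\mathscr{O}_{U}$; these identifications glue to the global statement $\pi_{*}\mathscr{O}_{G/P_{1}}=\mathscr{O}_{G/P_{2}}$. With this established, $\pi$ is a fibration, and applying Theorem \ref{LangerFibrationTheorem} to $\pi$ shows that $G/P_{2}$ is $D$-affine whenever $G/P_{1}$ is. The only step requiring any care is the computation of $\pi_{*}\mathscr{O}$, but this is routine given the local triviality of flag bundles, so there is no genuine obstacle; this is precisely why the corollary is essentially immediate from Langer's theorem.
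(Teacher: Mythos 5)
Your proof is correct and follows the same route as the paper, which simply applies Theorem \ref{LangerFibrationTheorem} to the natural projection $G/P_{1}\to G/P_{2}$; you merely spell out the (standard) verification that this projection is a fibration, i.e.\ that $\pi_{*}\mathscr{O}_{G/P_{1}}=\mathscr{O}_{G/P_{2}}$, which the paper leaves implicit.
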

\noindent
We now explain how to derive Corollaries \ref{Cor2} and \ref{Cor3} from Theorem \ref{Thm1} and Corollary \ref{LangerFibrationCorollary}. As noted in the introduction, we have an isomorphism $Q_{4}\simeq\textnormal{Gr}(2,4)$, because in its Pl{\"u}cker embedding $\textnormal{Gr}(2,4)$ is cut-out by a single quadric equation in $\PP^{5}$. Therefore, $\textnormal{Gr}(2,4)$ is not $D$-affine if $\textnormal{char }K=p\geq3$ by Theorem \ref{Thm1}. Now, fix $E$ to be an $n$-dimensional $K$-vector space and recall that every flag variety for $\textnormal{SL}_{n}$ is of form
\[
\textnormal{Flag}(i_{_{1}}\dots,i_{r};E)=
\left\{
V_{i_{1}}\subset\dots\subset V_{i_{r}}\subset E:\dim V_{i_{r}}=i_{r}
\right\}
\]
where $1\leq r\leq n-1$ and $1\leq i_{1}<\dots<i_{r}\leq n-1$. There are natural surjections
\[
\textnormal{Flag}(i_{_{1}}\dots,i_{j-1},i_{j},i_{j+1},\dots,i_{r};E)\to\textnormal{Flag}(i_{_{1}}\dots,i_{j-1},i_{j+1},\dots,i_{r};E)
\]
given by forgetting the $V_{i_{j}}$ component of the flag, and it is well known that these may also be described as projections $\textnormal{SL}_{n}/P_{1}\to\textnormal{SL}_{n}/P_{2}$, as in Corollary \ref{LangerFibrationCorollary}. If we set $n=4$, then every flag variety for $\textnormal{SL}_{4}$ is isomorphic to one of the following.
\begin{enumerate}
    \item The full flag variety $\textnormal{SL}_{4}/B=\textnormal{Flag}(1,2,3;E)$,

    \item The partial flag variety $\textnormal{Flag}(1,2;E)\simeq \textnormal{Flag}(2,3;E)$,

    \item The incidence correspondence $\textnormal{Flag}(1,3;E)\simeq \PP(T_{\PP^{3}})$,

    \item The grassmannian $\textnormal{Flag}(2;E)\simeq\textnormal{Gr}(2,4)$,

    \item The projective space $\textnormal{Flag}(1;E)\simeq \textnormal{Flag}(3;E)\simeq \PP^{3}$.
\end{enumerate}
As explained above, flag varieties from (1), (2), and (4) are not $D$-affine if $\textnormal{char }K=p\geq3$, because they project to $\textnormal{Gr}(2,4)$ which is not $D$-affine. Therefore, Corollary \ref{Cor3} holds. At this point we remark that prior to our work it was known that (1) all flag varieties for $\textnormal{SL}_{n}$ with $n\leq 3$ are $D$-affine in any characteristic by the work of B.\ Haastert \cite{Haastert}, and (2) In the case $n=5$ the grassmannian $\textnormal{Gr}(2,5)$ is not $D$-affine in positive characteristic by the work of Kashiwara--Lauritzen \cite{Kashiwara-Lauritzen}. Therefore, our result fills a gap between these two cases and supports the expectation that for $n\geq4$ the flag varieties of $\textnormal{SL}_{n}$ will rarely be $D$-affine in positive characteristic (as explained in the introduction, the only known examples of such flag varieties are projective spaces).
\medskip

To end this section, we want to carry a similar analysis for flag varieties in type $D_{m+1}$ with $m\geq 3$, although in this case we cannot be as explicit. First, we briefly recall the general theory. We refer to Jantzen's monograph \cite{Jantzen_book} for more detailed discussion. If $G$ is a semi-simple, simply connected algebraic group then a choice of a Borel subgroup $B\subset G$ and a maximal torus $T\subset B$ determines a set $S$ of simple roots and for any subset $I\subset S$ a parabolic subgroup $B\subset P_{I}$ with the convention that $P_{\emptyset}=B$, and $P_{S}=G$. It is well known that any flag variety of $G$ must be isomorphic to $G/P_{I}$ for some $I$. If $I_{1}\subset I_{2}\subset S$ then $P_{I_{1}}\subset P_{I_{2}}$ and we have a projection $G/P_{I_{1}}\to G/P_{I_{2}}$ as in Corollary \ref{LangerFibrationCorollary}. For a simple root $\alpha\in S$ we let $P(\alpha)$ be the maximal parabolic subgroup corresponding to $I=S\setminus\{\alpha\}$. Now, consider the root system $D_{m+1}$ with $m\geq3$. The corresponding simply connected algebraic group is $G=\textnormal{Spin}(2m+2)$. It is well known that if we label elements of $S=\left\{\alpha_{1},\dots,\alpha_{m+1}\right\}$ as on the Dynkin diagram below then $Q_{2m}\simeq G/P(\alpha_{1})$.
\begin{figure}[h!]
  \begin{tikzpicture}[scale=.4]
    \node[xshift=0.1 cm, yshift=-0.5cm, label]{$\alpha_{1}$} (0 cm,0) circle (.2cm);
    \draw[xshift=0 cm,thick] (0 cm,0) circle (.2cm);
    \draw[thick] (0.3 cm,0) -- +(2.4 cm,0);
    \node[xshift=1.3 cm, yshift=-0.5cm, label]{$\alpha_{2}$} (0 cm,0) circle (.2cm);
    \draw[xshift=2 cm,thick] (1 cm,0) circle (.2cm);
    \draw[dotted, thick] (3.3 cm,0) -- +(2.4 cm,0);
    \draw[xshift=4 cm,thick] (2 cm,0) circle (.2cm);
    \node[xshift=2.7 cm, yshift=-0.5cm, label]{$\alpha_{m-2}$} (0 cm,0) circle (.2cm);
    \draw[xshift=6 cm,thick] (3 cm,0) circle (.2cm);
    \node[xshift=3.9 cm, yshift=-0.5cm, label]{$\alpha_{m-1}$} (0 cm,0) circle (.2cm);
    \draw[thick] (6.3 cm,0) -- +(2.4 cm,0);
    \draw[xshift=9 cm,thick] (30: 0.3) -- (20: 26 mm);
    \draw[xshift=9 cm,thick] (-30: 0.3) -- (-20: 26 mm);
    \draw[xshift=10 cm,thick] (30: 20 mm) circle (.2cm);
    \node[xshift=5.2 cm, yshift=0.5cm, label]{$\alpha_{m}$};
    \draw[xshift=10 cm,thick] (-30: 20 mm) circle (.2cm);
    \node[xshift=5.4 cm, yshift=-0.5cm, label]{$\alpha_{m+1}$};
  \end{tikzpicture}
  \caption{Dynkin diagram of type $D_{m+1}$}
\end{figure}
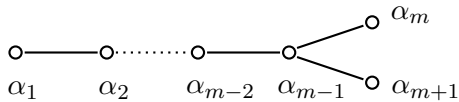
\begin{cor}
With the notation and under the assumptions above let $I\subset S$ be a subset such that $\alpha_{1}\notin I$. If $\textnormal{char }K=p\geq3$ then $\textnormal{Spin}(2m+2)/P_{I}$ is not $D$-affine. In particular, if $B\subset \textnormal{Spin}(2m+2)$ is a Borel subgroup then the full flag variety $\textnormal{Spin}(2m+2)/B$ is not $D$-affine.
\end{cor}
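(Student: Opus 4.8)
The plan is to reduce the statement directly to Theorem \ref{Thm1} via Langer's fibration result recorded in Corollary \ref{LangerFibrationCorollary}. The whole argument hinges on setting up the correct inclusion of parabolic subgroups and then invoking the contrapositive of the fibration criterion; there is no substantive new content beyond bookkeeping.

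First I would unwind the hypothesis $\alpha_{1}\notin I$. Since $I\subset S$ and $\alpha_{1}\notin I$, we have $I\subseteq S\setminus\{\alpha_{1}\}$. By the monotonicity of the assignment $I\mapsto P_{I}$ recalled above (smaller subsets give smaller parabolics, with $P_{\emptyset}=B$ and $P_{S}=G$), this yields the inclusion $P_{I}\subseteq P_{S\setminus\{\alpha_{1}\}}=P(\alpha_{1})$. Consequently there is a projection of flag varieties $G/P_{I}\to G/P(\alpha_{1})$, which is a fibration in the sense required by Corollary \ref{LangerFibrationCorollary}.

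Second I would invoke the identification $G/P(\alpha_{1})\simeq Q_{2m}$ recalled above, coming from the fact that $\alpha_{1}$ is the node of the $D_{m+1}$ diagram attached to the vector representation. Since $m\geq 3\geq 2$ and $\textnormal{char } K=p\geq 3$, Theorem \ref{Thm1} tells us that $Q_{2m}$ is not $D$-affine. I would then apply the contrapositive of Corollary \ref{LangerFibrationCorollary} to the inclusion $P_{I}\subseteq P(\alpha_{1})$: were $G/P_{I}$ $D$-affine, then the base $G/P(\alpha_{1})\simeq Q_{2m}$ of the fibration would be $D$-affine as well, contradicting the previous step. Hence $\textnormal{Spin}(2m+2)/P_{I}$ is not $D$-affine. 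The assertion about the full flag variety is the special case $I=\emptyset$, for which trivially $\alpha_{1}\notin\emptyset$ and $P_{\emptyset}=B$.

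I do not expect any genuine obstacle, as the result is a formal consequence of Theorem \ref{Thm1} together with Langer's fibration theorem. The only point requiring a little care is making sure the inclusion of parabolics runs in the direction compatible with Corollary \ref{LangerFibrationCorollary}, so that the quadric appears as the base rather than the total space of the fibration, and recalling the standard identification $Q_{2m}\simeq\textnormal{Spin}(2m+2)/P(\alpha_{1})$ that places the quadric at the vector node.
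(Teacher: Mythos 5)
Your proposal is correct and is exactly the argument the paper intends: the preceding discussion in Section \ref{Section_More_Counterexamples} sets up precisely the inclusion $P_{I}\subseteq P(\alpha_{1})$, the fibration $G/P_{I}\to G/P(\alpha_{1})\simeq Q_{2m}$, and the appeal to Theorem \ref{Thm1} via Corollary \ref{LangerFibrationCorollary}. Nothing is missing; the special case $I=\emptyset$ for the full flag variety is handled as you describe.
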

\section{Related problems and some further questions}\label{Section_Questions}
In this final section, we discuss problems related to $D$-affinity of flag varieties and we pose some questions motivated by the results of this paper. First, as explained in the introduction (cf. \cite[Lemma 1.5]{Langer_D-affine}), the process of proving $D$-affinity of a smooth variety $X$ can be split into
\begin{enumerate}
    \item showing that $X$ is $D$-quasi-affine, and

    \item Proving that the vanishing $H^{i}(X,\Dx)=0$ holds for all $i>0$.
\end{enumerate}
Although we do not know whenever all flag varieties are $D$-quasi-affine in positive characteristic, it seems that in the above (2) is much more subtle that (1). For example, Haastert \cite[4.4.2 Korollar]{Haastert} showed that full flag varieties are $D$-quasi-affine, and Langer \cite[Proposition 2.7]{Langer_Quadrics} show that quadrics are $D$-quasi-affine, while the results of this paper show that there exist both quadrics and full flag varieties that are not $D$-affine. The problem of determining the vanishing in (2) is closely related to the problem of determining quasi-exceptionality of $\mathsf{F}_{*}^{e}\Ox$ that has been studied quite intensively, both in the context of flag varieties and some non-homogeneous Fano varieties. Below, we survey the latter problem and formulate some questions.
\medskip

Assume that $X$ is smooth and projective. Recall, that a vector bundle $\eE$ is called \textit{quasi-exceptional} if
\[
\textnormal{Ext}^{i}_{\Ox}(\eE,\eE)=0\qquad(i>0).
\]
Over the field of positive characteristic we have, as in Section \ref{Section_Differential_Operators}
\[
H^{i}(X,\Dm)=\textnormal{Ext}^{i}_{\Ox}(\mathsf{F}_{*}^{e}\Ox,\mathsf{F}_{*}^{e}\Ox),\ H^{i}(X,\Dx)=\varinjlim_{e\to\infty}\textnormal{Ext}^{i}_{\Ox}(\mathsf{F}_{*}^{e}\Ox,\mathsf{F}_{*}^{e}\Ox),
\]
so it follows that if $\mathsf{F}_{*}^{e}\Ox$ is quasi-exceptional for $e\gg0$ then the vanishing (2) holds. If $X$ is $F$-split (for example, a flag variety) then Andersen--Kaneda \cite[Section 2, Propositon]{Andersen-Kaneda} claimed the converse to be true; the vanishing of $H^{i}(X,\Dx)$ for $i>0$ should imply quasi-exceptionality of $\mathsf{F}_{*}^{e}\Ox$ for all $e>0$. Unfortunately, as observed by A.\ Langer \cite[Discussion after Corollary 0.3]{Langer_Quadrics} and D.\ Mallory \cite[Remark 7.4]{Mallory}, the proof of this claim contains a gap and only shows that if $H^{i}(X,\Dm)=0$ then $H^{i}(X,\mathscr{D}_{X}^{(e-1)})=0$.
\medskip

Let us summarize the current state of knowledge.
\begin{lista}\label{List2}
The following is known about quasi-exceptionality of $\mathsf{F}_{*}^{e}\Ox$ on a flag variety $X$.
\begin{enumerate}
    \item In the case of examples (1)-(4) from List \ref{List1}, the authors show that $\mathsf{F}_{*}^{e}\Ox$ is quasi-exceptional for all $e>0$.

    \item The quasi-exceptionality of $\mathsf{F}_{*}^{e}\mathscr{O}_{Q_n}$ on an arbitrary smooth quadric $Q_{n}$ has been determined for all $e$ by A.\ Langer \cite{Langer_Quadrics}, and in the case $p=2$ by P.\ Achinger \cite{Achinger2}. The answer depends on $e,p,n$. Before, A.\ Samokhin \cite{Samokhin_qe}, \cite{Samokhin_quadrics} showed that quasi-exceptionality holds for $e=1$ and all $n$.

    \item Samokhin also showed that $\mathsf{F}_{*}\Ox$ is quasi-exceptional for $X=\PP(T_{\PP^{n}})$.

    \item When $X=\textnormal{Gr}(2,n)$ with $n\geq5$ and $p\geq3$, Readschelders-{\v S}penko--Van den Bergh \cite{Readschelders-Spenko-VdBergh_1} showed that $\mathsf{F}_{*}^{e}\Ox$ is \textit{not} quasi-exceptional for all $e>0$ (except from a finite number of special cases in characteristic $p=3$). 
    \item By the work of M.\ Kaneda \cite{Kaneda_1}, the quasi-exceptionality of $\mathsf{F}^{e}\Ox$ also \textit{fails} for all $e\geq1$ if $p\geq 11$, and $X\simeq G/P$ with $G$ the group of exceptional type $G_{2}$ and $P\subset G$ the parabolic subgroup associated to the short simple root.
\end{enumerate}
\end{lista}
\noindent
We remark that in the context of low dimensional non-homogenous Fano varieties, similar results have been obtained by D.\ Mallory \cite{Mallory} (see also N.\ Hara \cite{Hara}). It is worth pointing out that in the case of examples (4)-(5) in the above list, quasi-exceptionality breaks for $i=1$, so in principle one could try to disprove $D$-affinity of these varieties using our Proposition \ref{Prop1}. In the case of grassmanian, the non-vanishing class in $\textnormal{Ext}^{1}$ is realized by the twist of the tautological extension (cf. \cite[Lemma 16.8]{Readschelders-Spenko-VdBergh_1})
\begin{equation}\label{tautologcaExtension}
0\to \cR(-2)\to V\otimes\Ox(-2)\to \cQ(-2)\to0.
\end{equation}
\begin{que}
Is it true that the extension \textnormal{(\ref{tautologcaExtension})} remains non-split after applying $\mathsf{F}_{*}^{e}$ for arbitrary $e\geq1$?
\end{que}
\noindent
If the answer to the above is affirmative, then it would follow that $\textnormal{Gr}(2,n)$ is not $D$-affine for $n\geq 6$. As we have already mentioned, the case $n=4$ follows from this paper and the case $n=5$ follows from the work of Kashiwara--Lauritzen \cite{Kashiwara-Lauritzen}. 
\medskip

\noindent
As explained above, the first Frobenius pushforward of $\Ox$ is quasi-exceptional for the incidence correspondence $X=\PP(T_{\PP^{n}})$. By the results of the previous section, an answer to the question below would complete the classification of $D$-affine flag varieties for $\textnormal{SL}_{4}$.
\begin{que}
Is $\PP(T_{\PP^{3}})$ $D$-affine?
\end{que}
\noindent
We remark that $\PP(T_{\PP^{2}})\simeq\textnormal{SL}_{3}/B$ is $D$-affine by Haastert's result \cite{Haastert}.
\medskip

\noindent
Finally, we remark that by Langer's result the smooth quadric $Q_{2m+1}$ is $D$-affine if $p\geq 2m+1$, but his approach does not allow to conclude the same in small characteristics. Therefore, it would be interesting to aswer the following question.
\begin{que}
Is $Q_{2m+1}$ $D$-affine if $p\leq 2m$?
\end{que}

\newpage
\bibliographystyle{plain}
\bibliography{Bibliography}

\end{document}